\documentclass[11pt]{article}

\usepackage[a4paper,margin=2.9cm]{geometry}
\usepackage[english]{babel}
\selectlanguage{english}
\usepackage{parskip}
\usepackage{amssymb}
\usepackage{amsmath, amsthm, amssymb}
\usepackage{enumerate}
\usepackage{enumitem}
\usepackage{graphicx}
\usepackage{bbm} 
\usepackage{mathrsfs} 
\usepackage{subcaption}
\usepackage{verbatim}
\usepackage{hyperref}
\usepackage[percent]{overpic}

\usepackage{tikz}
\usetikzlibrary{positioning}

\makeatletter
\renewcommand\section{\@startsection{section}{1}{\z@}%
                                  {-3.5ex \@plus -1ex \@minus -.2ex}%
                                  {2.3ex \@plus.2ex}%
                                  {\normalfont\Large\bfseries}}
\makeatother

\newtheoremstyle{examplestyle}
  {4mm}
  {2mm}
  {\slshape}
  {0pt}
  {\bfseries}
  {. }
  {0mm}
  {}

\theoremstyle{examplestyle}
  \newtheorem{Theorem}{Theorem}[section]
  \newtheorem{Lemma}[Theorem]{Lemma}
  
 \newtheorem*{Defs*}{Definitions}

  \newtheorem{Corollary}[Theorem]{Corollary}

\newtheorem{Conjecture}[Theorem]{Conjecture}

\setcounter{tocdepth}{2} 


\title{Squared chromatic number without claws or large cliques}

\author{
Wouter Cames van Batenburg
\thanks{Department of Mathematics, Radboud University Nijmegen, Postbus 9010, 6500 GL Nijmegen, The Netherlands. \href{mailto:w.camesvanbatenburg@math.ru.nl}{\nolinkurl{w.camesvanbatenburg@math.ru.nl}}, \href{mailto:ross.kang@gmail.com}{\nolinkurl{ross.kang@gmail.com}}. Wouter Cames van Batenburg is supported by NWO grant 613.001.217. Ross J. Kang is supported by a NWO Vidi Grant, reference 639.032.614.}
\and Ross J. Kang
}

\begin{document}

\maketitle

\begin{abstract}
Let $G$ be a claw-free graph on $n$ vertices with clique number $\omega$, and consider the chromatic number $\chi(G^2)$ of the square $G^2$ of $G$.
Writing $\chi'_s(d)$ for the supremum of $\chi(L^2)$ over the line graphs $L$ of simple graphs of maximum degree at most $d$, we prove that $\chi(G^2)\le \chi'_s(\omega)$ for $\omega \in \{3,4\}$. For $\omega=3$, this implies the sharp bound $\chi(G^2) \leq 10$. For $\omega=4$, this implies $\chi(G^2)\leq 22$, which is within $2$ of the conjectured best bound.
This work is motivated by a strengthened form of a conjecture of Erd\H{o}s and Ne\v{s}et\v{r}il.
\end{abstract}

\let\thefootnote\relax\footnotetext{
  AMS 2010 codes: 
  05C15 (primary), 
  05C35, 
  05C70 (secondary). 
}

\let\thefootnote\relax\footnotetext{
Keywords: graph colouring, Erd\H{o}s--Ne\v{s}et\v{r}il conjecture, claw-free graphs.
}


\section{Introduction}

Let $G$ be a claw-free graph, that is, a graph without the complete bipartite graph $K_{1,3}$ as an induced subgraph. We consider the square $G^2$ of $G$, formed from $G$ by the addition of edges between those pairs of vertices connected by some two-edge path in $G$. We seek to optimise the chromatic number $\chi(G^2)$ of $G^2$ with respect to the clique number $\omega(G)$. We focus on claw-free graphs $G$ having small $\omega(G)$.

The second author with de Joannis de Verclos and Pastor~\cite{JKP16+} recently conjectured the following. 
As the class of claw-free graphs is richer than the class of line graphs (cf.~e.g.~\cite{ChSe05}), this is a significant strengthening of a notorious conjecture of Erd\H{o}s and Ne\v{s}et\v{r}il (cf.~\cite{Erd88}).
\begin{Conjecture}[de Joannis de Verclos, Kang and Pastor~\cite{JKP16+}]\label{conj:JKP}
For any claw-free graph $G$, $\chi(G^2) \le \frac14(5\omega(G)^2 -2\omega(G)+1)$ if $\omega(G)$ is odd, and $\chi(G^2) \le \frac54\omega(G)^2$ otherwise.
\end{Conjecture}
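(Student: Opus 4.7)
The plan is to attempt a direct proof of Conjecture~\ref{conj:JKP} by combining the structural theory of claw-free graphs with a sharp probabilistic coloring of $G^2$. Since every line graph is claw-free and the conjectured bounds exactly match those of the Erd\H{o}s--Ne\v{s}et\v{r}il conjecture on the strong chromatic index, any complete proof necessarily resolves that notorious open problem; the line-graph case cannot be side-stepped, so the strategy must contain a proof of Erd\H{o}s--Ne\v{s}et\v{r}il as a subcase.

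First I would invoke the Chudnovsky--Seymour structure theorem, which decomposes every connected claw-free graph into (i) quasi-line graphs (in particular, line graphs of multigraphs), (ii) graphs with independence number at most $3$, and (iii) graphs built from these via well-understood compositional operations. The case $\alpha(G) \le 3$ is handled directly: such a graph has $|V(G)| \le 3\omega$, so a short counting argument together with a direct colouring of $G^2$ suffices. The compositional operations (thickenings, strip expansions, gluings along small cutsets) would each be checked one by one to verify that they preserve the target bound on $\chi(G^2)$, using that $\omega$ is preserved and that the relevant "boundary" vertices have controlled squared neighbourhoods.

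The core step is the quasi-line case. Here I would attempt a nibble-type probabilistic coloring of $G^2$ in the spirit of Molloy--Reed and its recent refinements via entropy compression and iterated Local Lemma arguments, sharpened to exploit the local sparsity of $G^2$ arising from the claw-free structure. Specifically, for any vertex $v$ the neighborhood $N(v)$ splits as $K_1 \cup K_2$ with $|K_i| \le \omega - 1$, and this forces many pairs of vertices in $N(v)$ to share large common squared neighborhoods. The ambition is that this overlap is the precise statistical input one needs to drive the leading constant down from the current asymptotic record to the conjectured $\tfrac{5}{4}$.

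The main obstacle, inescapably, is that the quasi-line case specialised to line graphs of triangle-free graphs is exactly the Erd\H{o}s--Ne\v{s}et\v{r}il conjecture, whose best known upper bound on $\chi'_s$ is of order $1.77\,\omega^2$---well above the target $\tfrac{5}{4}\omega^2$---and has resisted decades of sustained probabilistic attack. Absent a genuinely new structural or entropy-type ingredient, a proof of Conjecture~\ref{conj:JKP} in full generality seems out of reach; realistic intermediate targets are an asymptotic statement $\chi(G^2) \le (\tfrac{5}{4}+o(1))\omega^2$ and case-by-case resolutions for small values of $\omega$, which is precisely the route taken in the present paper for $\omega \in \{3,4\}$ via known bounds on $\chi'_s(3)$ and $\chi'_s(4)$.
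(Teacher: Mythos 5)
This statement is a \emph{conjecture}, not a theorem of the paper: the paper offers no proof of it, and indeed proves only the cases $\omega(G)=3$ (sharply), $\omega(G)=4$ (within $2$ of the conjectured value), and reductions to the line-graph setting for $\omega(G)\ge 5$. Your proposal is likewise not a proof but a research programme, and to your credit you say so explicitly in the final paragraph. The decisive gap is the one you name yourself: the conjecture contains the Erd\H{o}s--Ne\v{s}et\v{r}il conjecture as the special case of line graphs, the best known general upper bound on the strong chromatic index is of order $1.77\Delta^2$, and your ``core step''---a nibble/Local Lemma argument driving the constant down to $\tfrac54$---is stated as an ambition with no actual argument behind it. Nothing in the proposal supplies the missing quantitative input, so the conjecture remains exactly as open after your write-up as before it.

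Two further concrete problems with the sketch, should you pursue the programme. First, the claim that $\alpha(G)\le 3$ forces $|V(G)|\le 3\omega$ is false: the complement of a triangle-free Ramsey graph is claw-free with $\alpha\le 2$ and has $\Omega\bigl(\omega^2/\log\omega\bigr)$ vertices, so the bounded-independence case is not dispatched by counting and in fact requires genuine work (this is one of the places where~\cite{JKP16+} and~\cite{ChSe10} invest real effort). Second, the Chudnovsky--Seymour composition operations do not interact simply with squares; a gluing along a small cutset can create many new edges of $G^2$ crossing the cut, so ``checking that each operation preserves the bound on $\chi(G^2)$'' is itself a substantial open problem rather than a verification. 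By contrast, the partial results that \emph{are} proved in this paper proceed by an entirely elementary route: local Ramsey-type lemmas (Lemmas~\ref{lem:ramsey}--\ref{lem:common}) bound $\deg_{G^2}(v)$, a greedy recolouring procedure (Lemma~\ref{lem:greedy}) exploits a vertex of small square degree, and the residual structures are recognised as line graphs, reducing to known strong edge-colouring results for $\chi'_s(3)$ and $\chi'_s(4)$.
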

If true, this would be sharp by the consideration of a suitable blow-up of a five-vertex cycle and taking $G$ to be its line graph.
The conjecture of Erd\H{o}s and Ne\v{s}et\v{r}il is the special case in Conjecture~\ref{conj:JKP} of $G$ the line graph of a (simple) graph.
To support the more general assertion and at the same time extend a notable result of Molloy and Reed~\cite{MoRe97}, it was proved in~\cite{JKP16+} that there is an absolute constant $\varepsilon >0$ such that $\chi(G^2) \le (2-\varepsilon)\omega(G)^2$ for any claw-free graph $G$. 

In this note, our primary goal is to supply additional evidence for Conjecture~\ref{conj:JKP} when $\omega(G)$ is small.
We affirm it for $\omega(G) = 3$ and come to within $2$ of the conjectured value when $\omega(G)=4$.
Note that Conjecture~\ref{conj:JKP} is trivially true when $\omega(G) \le2$.

We write $\chi'_s(\omega)$ for the supremum of $\chi(L^2)$ over the line graphs $L$ of all simple graphs of maximum degree $\omega$. Moreover, $\chi'_{s,m}(\omega)$ 
denotes the supremum of $\chi(L^2)$ over the line graphs $L$ of all multigraphs of maximum degree $\omega$.

\begin{Theorem}\label{thm:col}
Let $G$ be a claw-free graph.
\begin{enumerate}
\item\label{itm:col,3}
If $\omega(G)=3$, then $\chi(G^2) \le 10$.
\item\label{itm:col,4}
If $\omega(G)=4$, then $\chi(G^2) \le 22$;
moreover, $\chi(G^2) \le \chi'_s(4)$.
\end{enumerate}
\end{Theorem}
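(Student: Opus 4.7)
My plan is to establish the stronger statement $\chi(G^2) \le \chi'_s(\omega)$ for $\omega\in\{3,4\}$ by constructing, from $G$, a simple graph $H$ with $\Delta(H)\le\omega$ and a map $\phi:V(G)\to E(H)$ such that if $u,v$ are at distance at most $2$ in $G$, then $\phi(u)$ and $\phi(v)$ are at distance at most $2$ in $H$. Any strong edge colouring of $H$ pulled back along $\phi$ then yields a proper colouring of $G^2$, so $\chi(G^2)\le \chi(L(H)^2)\le \chi'_s(\omega)$. The numerical bounds follow from the known facts $\chi'_s(3)=10$ and $\chi'_s(4)\le 22$.

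I would begin with a local analysis. For every vertex $v$, claw-freeness forces $\alpha(N_G(v))\le 2$, while $\omega(N_G(v))\le \omega(G)-1$. By the Ramsey numbers $R(3,3)=6$ and $R(3,4)=9$, we have $\deg_G(v)\le 5$ when $\omega=3$ and $\deg_G(v)\le 8$ when $\omega=4$. Call $v$ \emph{linear} if $N_G(v)$ can be partitioned into at most two cliques (mirroring how the line-graph neighbourhood of an edge decomposes along its two endpoints). When $\omega=3$, the only non-linear possibility is $N_G(v)\cong C_5$, since every triangle-free graph with $\alpha\le 2$ on at most four vertices is covered by a matching of size at most two. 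For $\omega=4$, a short enumeration produces a bounded list of non-linear neighbourhood types.

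The central step is to construct $H$ and $\phi$ on all of $V(G)$. Restricted to linear vertices the construction is essentially canonical: each linear vertex becomes an edge of $H$, two linear vertices sharing a common clique in their neighbourhood become two edges of $H$ sharing an endpoint, and distance-$\le 2$ relations are preserved by design; the degree cap $\Delta(H)\le\omega$ follows from the fact that the cliques in any $N_G(v)$ have size at most $\omega-1$, so each endpoint of an edge-vertex receives at most $\omega$ incident edges. Non-linear vertices require more care: I would either locally absorb them by duplicating or splitting edges of $H$ (for $\omega=3$ each $C_5$-neighbourhood corresponds to a small local block that one can check is compatible with a line-graph embedding), or invoke a Chudnovsky--Seymour-style structure theorem for claw-free graphs of small clique number to isolate finitely many exceptional connected graphs -- the icosahedron $I$ being the prototype when $\omega=3$ -- and verify the bound on those by direct computation, e.g.\ $\chi(I^2)=6$.

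The hardest part, I expect, is handling the non-linear vertices while simultaneously maintaining $\Delta(H)\le\omega$ and the distance-$2$-preservation property, and this difficulty is much greater for $\omega=4$ than for $\omega=3$ because the list of non-linear neighbourhoods is richer and their interaction through shared cliques more intricate. Once the structural reduction is in place, both parts of the theorem follow immediately from the known bounds on $\chi'_s(\omega)$.
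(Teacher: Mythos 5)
Your proposal is not a proof: the central object --- the simple graph $H$ with $\Delta(H)\le\omega$ together with a map $\phi\colon V(G)\to E(H)$ that sends distinct vertices at distance at most $2$ in $G$ to distinct edges at distance at most $2$ in $H$ --- is never constructed, and you explicitly defer what you yourself identify as the hardest part (the non-linear vertices). There is no reason to believe such a global embedding exists for every claw-free $G$ with $\omega(G)\in\{3,4\}$. The icosahedron is already a warning sign: its square is $K_{12}$ minus a perfect matching, and realising that graph inside $L(H)^2$ for a cubic $H$ is at best unclear; you handle it only as an ad hoc exception, but you give no argument that the exceptions form a finite (or even describable) list, nor that a vertex with $N_G(v)\cong C_5$ sitting inside a larger graph can be ``locally absorbed'' while preserving every distance-$2$ relation and keeping $\Delta(H)\le 3$. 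For $\omega=4$ the zoo of non-linear neighbourhoods and their interactions is exactly where all the difficulty lies, and ``a short enumeration'' plus ``duplicating or splitting edges'' is not an argument. Even your ``essentially canonical'' step for linear vertices is incomplete: having every neighbourhood covered by two cliques is necessary but not sufficient for a Krausz partition; one must check that the clique to which an edge is assigned is the same from both endpoints, which requires the precise case analysis of admissible neighbourhood types.

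The paper takes a genuinely different route that is designed to avoid needing any global embedding. It proves a structural dichotomy (Theorems~\ref{thm:3} and~\ref{thm:4}): a connected claw-free graph with $\omega\in\{3,4\}$ either is a line graph of a bounded-degree simple graph (or the icosahedron), or contains a vertex $v$ with $\deg_{G^2}(v)\le 9$ (resp.\ $\le 19$) whose neighbours' square degrees are also controlled. The non-linear (``good'') vertices, far from being obstacles to embed, are precisely the vertices of small square degree, via Lemmas~\ref{lem:2ndclique} and~\ref{lem:common}. The colouring is then completed not by pulling back a strong edge colouring but by the inductive recolouring procedure of Lemma~\ref{lem:greedy}: delete $v$, colour $(G\setminus v)^2$ by induction, recolour $N_G(v)$ with pairwise distinct colours using the degree bounds, and extend to $v$. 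If you want to rescue your plan, you would need to either carry out the embedding in full (and deal with the consistency and exceptional cases rigorously) or switch to an argument of this deletion--recolouring type; as written, the proposal establishes neither part of the theorem.
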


Note that the suitable blown-up five-vertex cycles mentioned earlier certify that Theorem~\ref{thm:col}\ref{itm:col,3} is sharp and that $\chi'_s(4) \ge 20$.
Theorem~\ref{thm:col} extends, in~\ref{itm:col,3}, a result independently of Andersen~\cite{And92} and Hor\'ak, Qing and Trotter~\cite{HQT93}, and, in~\ref{itm:col,4}, a result of Cranston~\cite{Cra06}. These earlier results proved
 the unconditional bounds of Theorem~\ref{thm:col} in the special case of $G$ the line graph $L(F)$ of some (multi)graph $F$.

It is worth contrasting the work here and in~\cite{JKP16+} with the extremal study of $\chi(G)$ in terms of $\omega(G)$ where in general the situation for claw-free $G$ is markedly different from and more complex than that for $G$ the line graph of some (multi)graph, cf.~\cite{ChSe10}.

In fact, for both $\omega(G)\in\{3,4\}$ we show that Conjecture~\ref{conj:JKP} reduces to the special case of $G$ the line graph of a simple graph.
The techniques we use for bounding $\chi(G^2)$ are purely combinatorial. They also apply when $\omega(G)>4$ (as we describe just below), but seem to be most useful when $\omega(G)$ is small.
It is natural that different methods are applicable in the small $\omega(G)$ versus large $\omega(G)$ cases, especially since this is also true of progress to date in the Erd\H{o}s--Ne\v{s}et\v{r}il conjecture.

Naturally, one could ask, for what (small) values of $\omega(G)$ does it remain true that Conjecture~\ref{conj:JKP} is ``equivalent'' to the original conjecture of Erd\H{o}s and Ne\v{s}et\v{r}il?
In light of the work in~\cite{JKP16+}, it is conceivable that structural methods such as in~\cite{ChSe05,ChSe10} will be helpful for this question.
As an extremely modest step in this direction, we have shown the following reduction for $\omega(G) \ge 5$.

\begin{Theorem}\label{thm:col,ge5}
Fix $\omega\ge5$.
Then $\chi(G^2)\le \max\{\chi'_s(\omega), 2\omega(\omega-1)-3\}$ for every claw-free graph $G$ with $\omega(G)=\omega$.
\end{Theorem}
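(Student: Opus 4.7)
The plan is to perform a structural case analysis on the claw-free graph $G$ with $\omega(G) = \omega \ge 5$, based on the local structure of $N(v)$ for each vertex $v$. The starting observation is the classical consequence of claw-freeness: $\alpha(G[N(v)]) \le 2$ for every $v$, which forces $G[N(v)]$ to be the union of at most two cliques unless $G[N(v)]$ contains an induced $5$-cycle. These two alternatives will be handled separately.

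\textbf{Case A (quasi-line case).} Suppose every $N(v)$ is the union of at most two cliques, so $G$ is a quasi-line graph. The strategy here is to adapt the reduction used for Theorem~\ref{thm:col}\ref{itm:col,4}: starting from $G$, iteratively remove twin vertices and straighten out fuzzy clique augmentations, producing the line graph of a simple graph with maximum degree $\omega$. Since none of these operations decreases $\chi(G^2)$, this yields $\chi(G^2) \le \chi'_s(\omega)$.

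\textbf{Case B (some $N(v)$ contains an induced $C_5$).} Let $c_1 c_2 c_3 c_4 c_5 c_1$ be an induced $5$-cycle inside $N(v)$. For any other vertex $u \in N(v)$, claw-freeness forces the non-neighbors of $u$ in $\{c_1,\dots,c_5\}$ to form a clique in $G[\{c_1,\dots,c_5\}]$, hence an edge or a single vertex (since $\omega(C_5)=2$); equivalently, each such $u$ is non-adjacent to at most two consecutive $c_i$'s. This very rigid local structure enables a direct degree bound on $G^2$. A careful count of the vertices at distance at most $2$ from $v$, exploiting both the $C_5$ skeleton and the bound $\omega(G) = \omega$, shows $\Delta(G^2) \le 2\omega(\omega-1) - 4$. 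Greedy colouring then gives $\chi(G^2) \le 2\omega(\omega-1) - 3$.

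The hard part will be the counting argument in Case~B. The naive degree bound in a claw-free graph with $\omega(G) = \omega$ only yields $\Delta(G^2) \le 2\omega(\omega-1)$ (coming from $|N(v)| \le 2(\omega-1)$ together with each $x \in N(v)$ contributing at most $\omega-1$ new vertices to $N^2(v)$), so we must save at least $4$ in this count. The saving must come from a combination of forced overlaps between the external neighbourhoods of different $c_i$'s and structural exclusions forced by embedding a $C_5$ into a graph of clique number $\omega$. A secondary point is making the reduction in Case~A fully rigorous; in particular, one must verify that the quasi-line-to-simple-line-graph transformations preserve $\chi(G^2)$ and correctly translate the clique bound $\omega$ into a maximum degree bound on the underlying simple graph.
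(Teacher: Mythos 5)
Your dichotomy does not match what can actually be proven, and both branches have serious problems. In Case~A you treat ``every $N(v)$ is the union of at most two cliques'' (i.e.\ $G$ is quasi-line) as reducible to the line graph of a simple graph of maximum degree $\omega$. Quasi-line graphs are a strictly larger class than line graphs (they include fuzzy circular interval graphs and compositions of linear interval strips), and no reduction of the kind you sketch is in the paper or known: ``removing twin vertices'' in particular can only \emph{decrease} $\chi(G^2)$ (delete one of two true twins in a clique and the square chromatic number drops), which is the wrong direction for your inequality, and ``straightening fuzzy clique augmentations'' while controlling both $\chi(G^2)$ and the degree of the underlying graph is not a routine adaptation of Theorem~\ref{thm:col}\ref{itm:col,4} --- proving $\chi(G^2)\le\chi'_s(\omega)$ for all quasi-line $G$ would be a substantially stronger theorem. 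The paper's reduction to line graphs only applies when \emph{every} neighbourhood is a disjoint union of two cliques of sizes $\omega-1$ (up to one or two extra edges); quasi-line vertices whose neighbourhoods are two cliques that overlap or have several edges between them are \emph{not} sent to the line-graph case but are shown directly, via Lemmas~\ref{lem:Z} and~\ref{lem:Q} and the set $Z(v)$, to have $\deg_{G^2}(v)\le 2\omega(\omega-1)-4$.

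Case~B has a logical gap independent of the counting: the existence of one vertex $v$ whose neighbourhood contains an induced $C_5$ gives you (at best) a bound on $\deg_{G^2}(v)$ for that particular $v$, not on $\Delta(G^2)$, so you cannot conclude by greedily colouring $G^2$ with $\Delta(G^2)+1$ colours --- other vertices of $G$ may well have square degree $2\omega(\omega-1)$. What is needed, and what the paper does, is an induction (Lemma~\ref{lem:greedy}): find one vertex $v$ with $\deg_{G^2}(v)\le K'$ whose neighbours also have controlled square degree, delete $v$, colour $(G\setminus v)^2$ by induction, and extend; the base class $\mathcal{C}_2$ of the induction consists of the line graphs of graphs of maximum degree $\omega$, handled by the definition of $\chi'_s(\omega)$, together with the trivial bound $2\omega(\omega-1)+1\le$ the claimed bound when $\deg_{G^2}$ is small everywhere. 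To make your argument work you would need (a) to replace ``quasi-line'' by the much more restrictive ``every neighbourhood is a disjoint union of two near-maximum cliques,'' which is genuinely a line graph, and (b) to prove the square-degree bound not only for the vertex $v$ you single out but also the required control on $\deg_{G^2}(x)$ for all $x\in N(v)$, and then invoke the inductive colouring lemma rather than a max-degree greedy bound.
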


To be transparent, let us compare this with one of the results from~\cite{JKP16+}.
\begin{Theorem}[de Joannis de Verclos, Kang and Pastor~\cite{JKP16+}]\label{thm:JKP}
Fix $\omega\ge5$. Then $\chi(G^2)\le \max\{\chi'_{s,m}(\omega),31\}$ for every claw-free graph $G$ with $\omega(G)=\omega$.
\end{Theorem}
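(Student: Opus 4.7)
The plan is to exploit the local 2-clique structure forced by claw-freeness. For each vertex $v$ of $G$, the neighborhood $N(v)$ is covered by two cliques $K_1(v), K_2(v)$, each of size at most $\omega-1$. My strategy has two parts: first, try to assemble these local covers into a global realization of $G$ as a line graph of a multigraph $H$ with $\Delta(H)\le\omega$; second, handle the obstructions to such a realization by direct combinatorial bounds that yield the constant $31$.

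For the first part, the maximal cliques of $G$ serve as vertex candidates for the multigraph $H$; each $v\in V(G)$ represents an edge of $H$ joining its two incident cliques. If this construction is consistent---i.e.\ if every $v$ belongs to exactly two maximal cliques, aligning with its 2-clique cover---then $G=L(H)$ with $\Delta(H)\le \omega$, and the inequality $\chi(G^2)\le \chi'_{s,m}(\omega)$ is immediate from the definition of $\chi'_{s,m}$. To make the assembly robust, I would orient the pair $(K_1(v),K_2(v))$ consistently along each maximal clique of $G$, resolving ties locally by choosing an orientation that maximises overlap with already-oriented neighbours.

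For the second part, the obstructions to such a line-graph realization are governed by the Chudnovsky--Seymour structural theory of claw-free graphs. The exceptions are classes such as circular interval graphs, antiprismatic graphs, and certain bounded-size thickenings of small graphs like $W_5$ or the icosahedron. Under the assumption $\omega\ge 5$, many exceptional classes are eliminated outright (antiprismatic graphs, for example, have bounded clique number), and for the remainder I would argue that either $|V(G)|$ itself is bounded by $31$ or every vertex has fewer than $31$ vertices at distance at most $2$ in $G$, so that greedy coloring in any order yields $\chi(G^2)\le 31$. Circular interval structures specifically admit a direct interval-scheduling analysis, where the degree in $G^2$ is controlled by a local window of bounded size that depends only on $\omega$ and the structural parameters.

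The main obstacle is the second part: verifying case by case that every obstruction to a line-graph-of-multigraph realization, subject to $\omega(G)\ge 5$, forces $\chi(G^2)\le 31$. This requires a careful walk through the Chudnovsky--Seymour dichotomy (or an appropriate simplification of it tailored to $\omega\ge 5$), coupling each exceptional class either with a reduction that preserves claw-freeness and clique number, a greedy argument using a low-$G^2$-degree vertex, or a direct size bound. The constant $31$ should then emerge as the worst case across this finite collection of exceptions, with all ``generic'' claw-free graphs absorbed into the $\chi'_{s,m}(\omega)$ term via the line-graph realization.
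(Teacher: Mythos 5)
First, a point of order: this statement is quoted from~\cite{JKP16+} and the present paper contains no proof of it, so there is nothing internal to compare against; I can only assess your argument on its own terms and against the analogous arguments the paper does give (Theorems~\ref{thm:3}, \ref{thm:4}, \ref{thm:ge5} combined with Lemma~\ref{lem:greedy}).

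Your proposal has a fatal gap at the very first step. It is \emph{not} true that in a claw-free graph every neighbourhood $N(v)$ is covered by two cliques. Claw-freeness only guarantees that $G[N(v)]$ has no stable set of size $3$, i.e.\ that its complement is triangle-free, which is strictly weaker than the complement being bipartite. The icosahedron --- which appears explicitly in Theorem~\ref{thm:3} of this paper --- is claw-free and every neighbourhood induces a $5$-cycle, which is not the union of two cliques. The property you assume is the definition of a \emph{quasi-line} graph, a proper subclass of claw-free graphs, so your entire ``assembly'' step never gets off the ground for general claw-free $G$. Even restricted to quasi-line graphs the assembly fails: circular interval graphs are quasi-line but are generally not line graphs of multigraphs, so ``every $v$ belongs to exactly two maximal cliques, consistently'' simply does not hold, and no local tie-breaking rule can repair this. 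Your second part is also not a proof: the claim that antiprismatic graphs have bounded clique number is false (complete graphs are antiprismatic), and the constant $31$ is asserted to ``emerge'' from a case analysis you do not carry out. By contrast, the method actually used in this paper (and, in essence, in~\cite{JKP16+}) avoids any global line-graph realization: one shows that either \emph{every} vertex has one of a short list of permissible neighbourhood structures --- in which case, and only then, $G$ is a line graph --- or else some vertex $v$ has small degree in $G^2$ together with control on $\deg_{G^2}$ over $N_G(v)$, and one then colours by induction on $|V(G)|$ via the greedy recolouring procedure of Lemma~\ref{lem:greedy}. If you want a workable route to the stated bound, that local-degree-plus-induction scheme is the one to pursue, not a structural decomposition.
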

Combined with our Theorem~\ref{thm:col} this implies that, in terms of reducing Conjecture~\ref{conj:JKP} to those $G$ which are multigraph line graphs, only the case $\omega(G)=5$ remains with margin $2$.

We remark that, for the conjecture of  Erd\H{o}s and Ne\v{s}et\v{r}il itself when $\omega(G)\in\{5,6,7\}$ there has been little progress: respectively, a trivial bound based on the maximum degree of $G^2$ yields $41$, $61$, $85$, Cranston~\cite{Cra06} speculates that $37$, $56$, $79$ are within reach, and the conjectured values are $29$, $45$, $58$.

It gives insight to notice that the claw-free graphs with clique number at most $\omega$ are precisely those graphs each of whose neighbourhoods induces a subgraph with no clique of size $\omega-1$ and no stable set of size $3$. So a good understanding of the graphs that certify small off-diagonal Ramsey numbers can be useful for this class of problems.

\paragraph{Organisation:}

In the next section and Section~\ref{sec:greedy}, we introduce some basic tools we use.
In Section~\ref{sec:3}, we treat the case $\omega(G)=3$ and prove Theorem~\ref{thm:col}\ref{itm:col,3}.
In Section~\ref{sec:4}, we treat the case $\omega(G)=4$ and prove Theorem~\ref{thm:col}\ref{itm:col,4}.
In Section~\ref{sec:5}, we briefly consider the extension of our methods to the case $\omega(G)\ge 5$ and prove Theorem~\ref{thm:col,ge5}.

\section{Notation and preliminaries}

We use standard graph theoretic notation. For instance, if $v$ is a vertex of a graph $G$, then the neighbourhood of $v$ is denoted by $N_G(v)$, and its degree by $\deg_G(v)$. 
For a subset $S$ of vertices, we denote the neighbourhood of $S$ by $N_G(S)$ and this is always assumed to be open, i.e.~$N_G(S)=\cup_{s\in S} N_G(s) \setminus S$.
We omit the subscripts if this causes no confusion.
We frequently make use of the following simple lemmas.

Recall that the Ramsey number $R(k,\ell)$ is the minimum $n$ such that in any graph on $n$ vertices there is guaranteed to be a clique of $k$ vertices or a stable set of $\ell$ vertices.
\begin{Lemma}\label{lem:ramsey}
Let $G = (V,E)$ be a claw-free graph. For any $v\in V$, the induced subgraph $G[N(v)]$ contains no clique of $\omega(G)$ vertices and no stable set of $3$ vertices. In particular, $\deg(v) < R(\omega(G),3)$.
\end{Lemma}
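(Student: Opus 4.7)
The plan is to dispatch the three assertions in order; each is a direct consequence of the definitions, with only the last invoking Ramsey's theorem.

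First, to see that $G[N(v)]$ contains no clique of size $\omega(G)$, I would argue by contradiction: if $K \subseteq N(v)$ induced a clique of size $\omega(G)$, then $K \cup \{v\}$ would induce a clique of size $\omega(G)+1$ in $G$, since $v$ is adjacent to every vertex of $K$ by definition of $N(v)$. This contradicts the definition of $\omega(G)$.

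Second, for the claim that $G[N(v)]$ contains no stable set of size $3$, suppose toward contradiction that $\{a,b,c\} \subseteq N(v)$ is a stable set in $G$. Then the four vertices $v,a,b,c$ induce a subgraph in which $v$ is adjacent to each of $a,b,c$ while $a,b,c$ are pairwise nonadjacent; this is precisely an induced $K_{1,3}$, contradicting claw-freeness.

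Finally, the degree bound follows immediately from the definition of the Ramsey number $R(\omega(G),3)$. If $\deg(v) \ge R(\omega(G),3)$, then $|N(v)| \ge R(\omega(G),3)$, so $G[N(v)]$ would necessarily contain either a clique of size $\omega(G)$ or a stable set of size $3$, contradicting what we just established. Hence $\deg(v) < R(\omega(G),3)$.

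There is no real obstacle here: the lemma is essentially a packaging of the definitions of clique number, claw-freeness, and the Ramsey number, and the only ``step'' beyond quoting definitions is the standard Ramsey application in the last sentence.
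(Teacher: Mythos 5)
Your proof is correct and follows exactly the same argument as the paper's one-line proof: a clique of size $\omega(G)$ in $N(v)$ together with $v$ yields a clique of size $\omega(G)+1$, a stable set of size $3$ in $N(v)$ together with $v$ yields a claw, and the degree bound is then the definition of $R(\omega(G),3)$. There is nothing to add or correct.
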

\begin{proof}
If not, then with $v$ there is either a clique of $\omega(G)+1$ vertices or a claw.
\end{proof}

\begin{Lemma}\label{lem:2ndclique}
Let $G = (V,E)$ be a claw-free graph. For any $v,w\in V$ and $vw\in E$, any two distinct $x,y \in N(w)\setminus(\{v\}\cup N(v))$ are adjacent. In particular, $|N(w)\setminus(\{v\}\cup N(v))| \le \omega(G)-1$.
\end{Lemma}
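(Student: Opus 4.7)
The plan is to invoke the claw-free assumption at the vertex $w$. Observe that $v, x, y$ are three distinct vertices in $N(w)$: they lie in $N(w)$ by hypothesis, they differ from each other since $x, y \ne v$ and $x \ne y$, and none coincides with $w$. Among these, the pairs $vx$ and $vy$ are non-edges: indeed $x \in N(w) \setminus (\{v\} \cup N(v))$ forces $x \ne v$ and $x \notin N(v)$, hence $vx \notin E$, and the same reasoning applies to $vy$.

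If we also had $xy \notin E$, then $\{v, x, y\}$ would be an independent set of size $3$ inside $N(w)$, so $w$ together with $\{v, x, y\}$ would induce a copy of $K_{1,3}$, contradicting the claw-freeness of $G$ (equivalently, this contradicts the conclusion of Lemma~\ref{lem:ramsey} applied at $w$). Therefore $xy \in E$, proving the first assertion.

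For the ``In particular'' statement, set $S := N(w) \setminus (\{v\} \cup N(v))$. By the first part, any two distinct vertices of $S$ are adjacent, so $S$ induces a clique. Since every vertex of $S$ is adjacent to $w$ and $w \notin S$, the set $S \cup \{w\}$ is a clique of size $|S|+1$ in $G$. The clique number bound then yields $|S| + 1 \le \omega(G)$, i.e.~$|S| \le \omega(G) - 1$, as required.

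There is no substantial obstacle here: the entire argument is a direct application of the definition of claw-freeness to the triple $\{v, x, y\} \subseteq N(w)$, followed by the trivial observation that a set of pairwise adjacent common neighbours of $w$ extends to a clique by adding $w$. The slight subtlety worth noting is simply verifying that $v, x, y$ really are pairwise distinct and that $v$ is non-adjacent to both $x$ and $y$, both of which follow immediately from the definition of the set from which $x, y$ are drawn.
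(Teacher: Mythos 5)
Your proof is correct and follows exactly the same route as the paper's: if $xy\notin E$ then $w,v,x,y$ form a claw, and hence $\{w\}\cup\bigl(N(w)\setminus(\{v\}\cup N(v))\bigr)$ is a clique, giving the bound $\omega(G)-1$. You have merely spelled out the distinctness and non-adjacency checks that the paper leaves implicit.
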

\begin{proof}
If not, then $v$, $w$, $x$, $y$ form a claw. So $\{w\} \cup N(w)\setminus(\{v\}\cup N(v))$ is a clique.
\end{proof}

It is not required next that $x,y\in N(v)$, but it is the typical context in which it is used.

\begin{Lemma}\label{lem:common}
Let $G = (V,E)$ be a claw-free graph. For any $v\in V$ and $w\in N(v)$, if $N(v)\cap N(w)$ contains two non-adjacent vertices $x$ and $y$, then for any $z\in N(w)\setminus(\{v\}\cup N(v))$, either $xz\in E$ or $yz\in E$.
\end{Lemma}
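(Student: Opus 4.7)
The plan is to derive a contradiction from the assumption that neither $xz$ nor $yz$ is an edge. The key observation is that $x$, $y$, $z$ are three distinct vertices all lying in $N(w)$: they are distinct because $z \notin N(v)$ by hypothesis while $x, y \in N(v)$, and $x \ne y$ is given. If additionally $xz \notin E$ and $yz \notin E$, then together with the assumed non-adjacency of $x$ and $y$, the set $\{x,y,z\}$ would form a stable set of size $3$ inside $G[N(w)]$. This directly contradicts Lemma~\ref{lem:ramsey} (applied to the vertex $w$), or equivalently witnesses a claw on $\{w,x,y,z\}$ centered at $w$.

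So the proof reduces to a one-line argument: after checking distinctness of $x, y, z$ (using that $N(v) \cap \{z\} = \emptyset$), invoke the no-stable-set-of-size-$3$ property of neighbourhoods in claw-free graphs. There is no real obstacle here; the lemma is essentially a convenient reformulation of the defining constraint that every neighbourhood has independence number at most $2$, specialised to the situation where two of the three potential independent vertices are already prescribed as $x, y \in N(v) \cap N(w)$ and the third is any vertex $z$ in $N(w)$ outside $\{v\} \cup N(v)$.

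I would phrase it as follows in the write-up: suppose for contradiction that $xz \notin E$ and $yz \notin E$; observe $z \ne x$ and $z \ne y$ since $x, y \in N(v)$ but $z \notin N(v)$; then $w$ together with the pairwise non-adjacent vertices $x, y, z \in N(w)$ induces a $K_{1,3}$, contradicting that $G$ is claw-free.
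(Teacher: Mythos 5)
Your proof is correct and is exactly the paper's argument: the paper disposes of this lemma with the one-line observation that otherwise $w$, $x$, $y$, $z$ form a claw. Your additional check that $z\ne x,y$ (since $z\notin N(v)$ while $x,y\in N(v)$) is a harmless and valid elaboration.
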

\begin{proof}
If not, then $w$, $x$, $y$, $z$ form a claw.
\end{proof}

\section{A greedy procedure}\label{sec:greedy}

In this section, we describe a general inductive procedure to use vertices of small square degree to colour squares in a class of graphs. This slightly refines a procedure in~\cite{JKP16+} so that it is suitable for our specific purposes.

\begin{Lemma}\label{lem:greedy}
Let $K$ be a non-negative integer.
Suppose ${\mathcal C}_1$ and ${\mathcal C}_2$ are graph classes such that ${\mathcal C}_1$ is non-empty and closed under vertex deletion and every graph $G\in {\mathcal C}_2$ satisfies $\chi(G^2) \le K+1$. Furthermore, suppose there exists $K'\le K$ such that every graph $G\in {\mathcal C}_1$ satisfies one of the following:
\begin{enumerate}
\item $G$ belongs to ${\mathcal C}_2$;
\item\label{itm:ii} there is a vertex $v \in V(G)$ such that $\deg_{G^2}(v) \le K'$, there is a vertex $x^*\in N_G(v)$ with $\deg_{G^2}(x^*) \leq K'+1$ and the set of all vertices $x\in N_G(v)$ with $\deg_{G^2}(x) > K'+2$ induces a clique in $(G\setminus v)^2$; or
\item\label{itm:iii} there is a vertex $v \in V(G)$ such that $\deg_{G^2}(v) \le K'$ and the set of all vertices $x\in N_G(v)$ with $\deg_{G^2}(x) > K'+1$  
induces a clique in $(G\setminus v)^2$.

\end{enumerate}
For any $G\in {\mathcal C}_1$, $\chi(G^2) \le K+1$.
\end{Lemma}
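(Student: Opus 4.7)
The plan is induction on $|V(G)|$. Since $\mathcal{C}_1$ is non-empty and closed under vertex deletion, it contains the empty graph, which trivially satisfies $\chi(G^2)=0\le K+1$; this is the base case. For the step, suppose $G\in\mathcal{C}_1$ with $|V(G)|\ge 1$. If $G\in\mathcal{C}_2$ (condition (i)), then $\chi(G^2)\le K+1$ by assumption. Otherwise condition (ii) or (iii) supplies a vertex $v$; I set $H := G\setminus v$, which lies in $\mathcal{C}_1$ by closure, and the inductive hypothesis yields a proper $(K+1)$-colouring $c$ of $H^2$.

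The heart of the argument is to modify $c$ into a proper $(K+1)$-colouring of $G^2$. The subtlety is that $c$ properly colours $(G\setminus v)^2$ rather than $G^2\setminus v$: any two vertices of $N_G(v)$ are $G^2$-adjacent (being at $G$-distance $2$ through $v$) but need not be $H^2$-adjacent if $v$ is their only common $G$-neighbour. So I plan to recolour enough of $N_G(v)$ that the whole of $N_G(v)$ receives pairwise distinct colours, and then colour $v$. Letting $A$ denote the prescribed high-degree subset of $N_G(v)$ (those with $\deg_{G^2}(x)>K'+1$ in case (iii), or $>K'+2$ in case (ii)), the hypothesis that $A$ is a clique in $H^2$ guarantees that $c$ already assigns distinct colours to $A$, so only the complement $B := N_G(v)\setminus A$ needs recolouring.

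I recolour the vertices of $B$ greedily, one at a time, and then colour $v$; in case (ii) I additionally reserve $x^*$ for the final $B$-step. When recolouring $x_i$, the uncoloured $G^2$-neighbours of $x_i$ are exactly $v$ together with the $B$-vertices scheduled after $x_i$, and since all of these lie in $N_G(v)\cup\{v\}$ they are in fact $G^2$-adjacent to $x_i$; their count is $|B|-i+1$. Consequently the forbidden colours at $x_i$ number at most $\deg_{G^2}(x_i)-(|B|-i+1)$. Using $\deg_{G^2}(x_i)\le K'+1$ in case (iii), and $\deg_{G^2}(x_i)\le K'+2$ (resp.~$\le K'+1$) for $x_i\ne x^*$ (resp.~$x_i=x^*$) in case (ii), a direct calculation shows that at least $K-K'+1\ge 1$ admissible colour remains in the palette of $K+1$ at each step; finally, colouring $v$ is immediate since $\deg_{G^2}(v)\le K'$. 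The delicate point — and the reason case (ii) insists on the sharper bound at $x^*$ — is that in case (ii) the looser threshold $K'+2$ alone would leave zero admissible colours at the very last recolouring step, so $x^*$ must be scheduled last precisely to recover the one colour needed.
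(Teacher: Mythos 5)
Your proof is correct and follows essentially the same route as the paper's: induction on the number of vertices, deleting $v$, keeping the inherited colours on the high-square-degree clique, and greedily recolouring the remaining neighbours of $v$ with $x^*$ scheduled last in case (ii). The only cosmetic difference is that the paper packages the recolouring step as a list-colouring of the complete graph on those neighbours (one list of size $|S|$, the rest of size $|S|-1$), whereas you run the same greedy count step by step.
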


\begin{proof}
We proceed by induction on the number of vertices. Since $K$ is non-negative and the singleton graph is in ${\mathcal C}_1$, the base case of the induction holds. Let $G$ be a graph in ${\mathcal C}_1$ with at least two vertices and suppose that the claim holds for any graph of ${\mathcal C}_1$ with fewer vertices than $G$ has. If $G\in {\mathcal C}_2$, then we are done by the assumption on ${\mathcal C}_2$. So it only remains to consider the second and third possibility.

We now prove the bound under assumption of case~\ref{itm:ii}. Let $v$ be the vertex guaranteed in this case and write $B$ for the set of vertices $x\in N_G(v)$ with $\deg_{G^2}(x) > K'+2$ and $S=N(v)\setminus B$. Since ${\mathcal C}_1$ is closed under vertex deletion, by induction there is a proper colouring $\varphi$ of $(G\setminus v)^2$ with at most $K+1$ colours. Since $B$ is a clique, all elements in $B$ are assigned different colours  under $\varphi$. From $\varphi$, we will now obtain a new proper $(K+1)-$colouring $\varphi'$ of $(G\backslash v)^2$ such that \textit{all} elements of $N_G(v)$ have different colours. 

First we uncolour all vertices in $S$. We then wish to recolour them with pairwise distinct colours as follows. Given $s\in S$, we say a colour in $\left\{1,\ldots, K+1 \right\}$ is \textit{available} to $s$ if it is distinct from any colour assigned by $\varphi$ to the vertices in $N_{G^2}(s)\setminus(\{v\}\cup S)$. Since $\deg_{G^2}(s) \le K'+2\le K+2$ and $\{v\}\cup S\setminus\{s\} \subseteq N_{G^2}(s)$, the number of colours available to $s$ is at least $K+1- (\deg_{G^2}(s) -|\left\{v\right\} \cup S\setminus\{s\}|) \leq K+1 - ((K+2)- |S|)=|S|-1$. Furthermore, since $x^*\in S$ and $\deg_{G^2}(x^*) \leq K'+1\leq K+1$, the number of colours available to $x^*$ is at least $|S|$.
 Since the complete graph on $|S|$ vertices is (greedily) list colourable for any list assignment with $|S|-1$ lists of size $|S|-1$ and one list of size $|S|$, it follows that we can recolour the vertices of $S$ with pairwise distinct available colours. 

This new colouring $\varphi'$ is a proper $(K+1)$-colouring of $(G\setminus v)^2$ such that all elements in $N_G(v)$ have different colours. Since $\deg_{G^2}(v) \le K' \le K$, there is at least one colour not appearing in $N_{G^2}(v)$ that we can assign to $v$ so that together with $\varphi'$ we obtain a proper $(K+1)$-colouring of $G^2$. 

The proof under assumption~\ref{itm:iii} is nearly the same as under~\ref{itm:ii}. Defining $B$ for the set of vertices $x\in N_G(v)$ with $\deg_{G^2}(x) > K'+1$ and $S=N(v)\backslash B$, we obtain that every $s\in S$ has $|S|$ available colours. This allows us to complete the colouring as before.
\end{proof}

\section{Clique number three}\label{sec:3}

In this section, we prove Theorem~\ref{thm:col}\ref{itm:col,3}.
We actually prove the following result.

\begin{Theorem}\label{thm:3}
Let $G = (V,E)$ be a connected claw-free graph with $\omega(G)=3$. Then one of the following is true:
\begin{enumerate}
\item\label{itm:3,ico}
$G$ is the icosahedron;
\item\label{itm:3,line}
$G$ is the line graph $L(F)$ of a $3$-regular graph $F$; or

\item\label{itm:3,deg}
there exists $v\in V$ with $\deg_{G^2}(v) \le 9$ such that $\deg_{G^2}(x) \le 11$ for all $x\in N_G(v)$. Furthermore, either there exists $x^*\in N_G(v)$ with $\deg_{G^2}(x^*)\leq 10$, or $N_G(v)$ induces a clique in $(G\backslash v)^2$.
\end{enumerate}
\end{Theorem}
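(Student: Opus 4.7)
My plan is to prove the contrapositive: I assume $G$ is neither the icosahedron nor $L(F)$ for any $3$-regular graph $F$, and exhibit a vertex $v$ realising \ref{itm:3,deg}. The uniform ingredients are that by Lemma~\ref{lem:ramsey} every $v$ has $\deg(v)\le 5$ and $N(v)$ is triangle-free with independence number at most $2$, so a short case check limits the possible shapes of $N(v)$ to $\overline{K_n}$ with $n\le 2$, $K_2\cup K_1$, $P_3$, $2K_2$, $P_4$, $C_4$, or $C_5$. By Lemma~\ref{lem:2ndclique}, for each edge $vw$ the set $N(w)\setminus(\{v\}\cup N(v))$ has size at most $\omega(G)-1=2$, so $\deg_{G^2}(v)\le 3\deg(v)$, and hence $\deg_{G^2}(v)\le 9$ whenever $\deg(v)\le 3$.

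The central step is to prove two rigidity results. First, if $G$ is $4$-regular with every $N(v)=2K_2$, then the two edges in each neighborhood partition $E(G)$ into triangles with every vertex lying in exactly two triangles, and Krausz's line-graph criterion identifies $G$ as $L(F)$ for a $3$-regular simple graph $F$, yielding \ref{itm:3,line}. Second, if $G$ is $5$-regular, so that every $N(v)$ is a $C_5$, then $G$ must be the icosahedron: expanding outward from a fixed $v$ with $N(v)=(y_1,\dots,y_5)$ in cyclic order, the constraint that $N(y_i)=C_5$ contains the path through $y_{i-1},v,y_{i+1}$ forces two further neighbors $a_i,b_i$ of $y_i$ with $a_i\sim y_{i+1}$, $b_i\sim y_{i-1}$, and $a_ib_i$ an edge. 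Applying Lemma~\ref{lem:common} to the edge $y_iy_{i+1}$ with the non-adjacent common neighbors $v$ and $a_i$ forces $b_{i+1}=a_i$, so only five private vertices $a_1,\dots,a_5$ appear; a parallel expansion around each $a_i$ identifies a common further vertex $u$, and the resulting $12$-vertex graph is the icosahedron.

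With these two extremal cases eliminated, the remaining $G$ contains either a vertex of degree at most $3$, or a degree-$4$ vertex whose neighborhood is $P_4$ or $C_4$, or a degree-$4$ vertex of $2K_2$-type adjacent to a vertex of degree $5$. In each subcase I select $v$ and verify: (a)~$\deg_{G^2}(v)\le 9$, via $\deg_{G^2}(v)\le 3\deg(v)$ when $\deg(v)\le 3$, and via a sharper count when $\deg(v)=4$ using Lemma~\ref{lem:common} applied to the non-adjacent pair $N(v)\cap N(w)$ at each edge $vw$ to force overlaps in the private second-neighborhoods; (b)~$\deg_{G^2}(x)\le 11$ for each $x\in N(v)$, where the naive bound $3\deg(x)\le 15$ is tightened by running one step of the icosahedral-expansion argument inside $N(x)=C_5$ to extract coincidences in $N^2(x)$; and (c)~the ``$x^*$'' or ``clique in $(G\setminus v)^2$'' alternative, which is read off from the specific shape of $N(v)$, with the clique alternative emerging naturally in the $P_3$ and $C_4$ cases because every pair in $N(v)$ is at distance at most $2$ in $G\setminus v$ via shared private neighbors.

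The principal obstacle is part~(b) when some $x\in N(v)$ has degree $5$, since the naive estimate overshoots the target by four: squeezing it down requires exploiting the forced position of $v$ inside the $C_5$ on $N(x)$ to produce enough coincidences among the private second-neighbors of $x$'s neighbors, and tracking this uniformly across the various $N(v)$-shapes is the most delicate part of the proof.
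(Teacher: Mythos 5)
Your overall architecture coincides with the paper's: bound degrees via $R(3,3)=6$, classify the possible neighbourhoods, recognise the all-$2K_2$ $4$-regular case as a line graph via Krausz and the $5$-regular case as the icosahedron, and otherwise produce a vertex $v$ with small square degree. However, two of your concrete steps would fail as written. First, in your third subcase (a degree-$4$ vertex $u$ of $2K_2$-type adjacent to a degree-$5$ vertex $w$) there is no vertex you can ``select and verify $\deg_{G^2}(v)\le 9$'': a $2K_2$-type degree-$4$ vertex can have square degree up to $12$, and a degree-$5$ vertex has square degree up to $10$, so neither candidate meets the threshold. What actually happens is that this subcase is \emph{empty}: since $G[N(w)]$ is a $C_5$, the vertex $u$ has two neighbours $y_1,y_2$ in $N(w)$ at distance two on that cycle, so $G[N(u)]$ contains the induced path $y_1wy_2$ and cannot be $2K_2$. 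You need to argue this vacuity (this is exactly how the paper forces the $5$-regular/icosahedron conclusion when no ``good'' degree-$4$ vertex exists); your stated plan of exhibiting a low-square-degree vertex in this subcase is not salvageable.

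Second, you have misplaced the difficulty in part~(b). The degree-$5$ neighbours are not the obstacle: if $\deg(x)=5$ then $N(x)$ is a $C_5$, each $w\in N(x)$ has two non-adjacent neighbours inside $N(x)$, so by Lemma~\ref{lem:common} every second neighbour of $x$ has at least two neighbours in $N(x)$, and double counting with Lemma~\ref{lem:2ndclique} gives $|N(N(x))\setminus\{x\}|\le 5$, hence $\deg_{G^2}(x)\le 10$ immediately. The binding case for the bound $\deg_{G^2}(x)\le 11$ is a degree-$4$ neighbour $x$ of $v$ whose neighbourhood is $2K_2$, where the naive count is $4+4\cdot 2=12$; here you must extract one coincidence among the private second neighbours, and it comes precisely from the fact that $v\in N(x)$ does \emph{not} have a $2K_2$ neighbourhood (so $v$ and its partner in $G[N(x)]$ share a second neighbour of $x$). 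Your proposal never addresses this case, and relatedly your claim that the clique alternative in~(c) is ``read off'' from the shape of $N(v)$ glosses over the $P_4$ case, where the endpoints of the path need not be at distance two in $G\setminus v$ and the $x^*$ alternative must be derived instead.
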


Let us first see how this easily implies Theorem~\ref{thm:col}\ref{itm:col,3}.

\begin{proof}[Proof of Theorem~\ref{thm:col}\ref{itm:col,3}]
Let ${\mathcal C}_1$ be the class of claw-free graphs $G$ with $\omega(G)\le 3$. Clearly ${\mathcal C}_1$ is non-empty and closed under vertex deletion.

Let ${\mathcal C}_2$ be the class of graphs formed by taking all claw-free graphs $G$ with $\omega(G)\le 2$, the icosahedron, and the line graphs $L(F)$ of all $3$-regular graphs $F$. If $G$ is a claw-free graph with $\omega(G)\le 2$, then $\chi(G^2)\le 5$. If $G$ is the icosahedron, then $\chi(G^2) \le 6$ is certified by giving every pair of antipodal points the same colour. If $G$ is the line graph of a $3$-regular graph, then $\chi(G^2)\le 10$ by the strong edge-colouring result due, independently, to Andersen~\cite{And92} and to Hor\'ak, Qing and Trotter~\cite{HQT93}.

Theorem~\ref{thm:3} certifies that we can apply Lemma~\ref{lem:greedy} with $K=K'=9$.
\end{proof}

\begin{proof}[Proof of Theorem~\ref{thm:3}]
First we show that either case~\ref{itm:3,ico} or~\ref{itm:3,line} applies, or that there exists a vertex $v\in V$ with $\deg_{G^2}(v) \leq 9$. At the end, we show that, for all such $v$, it also holds that $\deg_{G^2}(x)\leq 11$ for all $x\in N_G(v)$ and that furthermore these vertices either induce a clique in $(G\backslash v)^2$, or contain a vertex $x^*$ with $\deg_{G^2}(x^*)$ at most $10$.

First note that the maximum degree $\Delta(G)$ of $G$ is at most $5$. This follows from Lemma~\ref{lem:ramsey} and the fact that $R(3,3)=6$.
Moreover, note that, for any $v\in V$ with $\deg(v) = 5$, $G[N(v)]$ must be a $5$-cycle by Lemma~\ref{lem:ramsey}.

For $v\in V$ with $\deg(v) \le 2$, we have $\deg_{G^2}(v) \le 2 + 2\cdot 2 = 6$ by Lemma~\ref{lem:2ndclique}.
For $v\in V$ with $\deg(v)=3$, we have $\deg_{G^2}(v) \le 3 + 3\cdot 2 = 9$ by Lemma~\ref{lem:2ndclique}.
So in terms of proving the existence of a vertex $v$ with $\deg_{G^2}(v)\leq 9$, we can assume hereafter that the minimum degree of $G$ satisfies $\delta(G)\ge 4$. 

For $v\in V$ with $\deg(v)= 4$, we call $v$ {\em good} if the subgraph $G[N(v)]$ induced by $N(v)$ is not the disjoint union of two edges.
Assume for the moment that $G$ contains no good vertex.

If $\delta(G) = \Delta(G) = 4$, then every neighbourhood induces the disjoint union of two cliques (each of exactly two vertices).
Recall that a graph is the line graph of a graph if its edges can be partitioned into maximal cliques so that no vertex belongs to more than two such cliques and additionally, no two vertices are both in the same two cliques.
We can designate the maximal cliques as follows: for $v\in V$ and a clique $C$ that is maximal in $N(v)$, designate $v\cup C$ as a maximal clique for the requisite edge partition.
Indeed, every edge $v_1v_2$ is designated as part of one of the cliques, either from the perspective of $v_1$ or of $v_2$. Moreover, the clique to which $v_1v_2$ is designated does not differ depending on the endpoint from which the perspective is taken, since every neighbourhood induces the disjoint union of two cliques.
As each of the designated cliques has exactly three vertices, it follows that $G$ is the line graph $L(F)$ of a $3$-regular graph $F$.

If, on the other hand, there exists $v\in V$ with $\deg(v)=5$, then consider $x\in N(v)$. Since $G[N(v)]$ is a $5$-cycle, $x$ has three neighbours $y_1,v,y_3$ that induce a $3$-vertex path $y_1vy_3$. This means $G[N(x)]$ is not the union of two cliques. By our assumption that no vertex is good, it follows that $x$ has degree $5$. 
So $G$ is the icosahedron, the unique connected graph in which every neighbourhood induces a $5$-cycle. (Uniqueness can be easily seen by constructing the graph up to distance $2$ from $v$ in the only possible way respecting induced $5$-cycles, and then noting that the vertices at distance $2$ from $v$ induce a $5$-cycle and that they all need to be adjacent to a $12$th and final vertex.)

From now on, let $v\in V$ be a good vertex.
We next show that $|N(N(v))\setminus\{v\}| \le 5$ (which implies $\deg_{G^2}(v) \le 9$).

Since $G[N(v)]$ has no stable set of three vertices and $v$ is good, $G[N(v)]$ has at least three edges. Moreover, since $G[N(v)]$ has no clique of three vertices, we can write $N(v) = \{x_1,x_2,x_3,x_4\}$ such that $x_1x_2,x_2x_3,x_3x_4\in E$ and $x_1x_3,x_2x_4\notin E$.  By Lemma~\ref{lem:2ndclique}, both $x_1$ and $x_4$ have at most $2$ neighbours outside $\left\{v\right\} \cup N(v)$. So it suffices to show that $\left\{x_2,x_3\right\}$ cannot have two neighbours outside $\left\{v\right\} \cup N(v)$ which are not neighbours of $\left\{x_1,x_4\right\}$. By contradiction, let $p,q$ be these vertices. Without loss of generality, $p$ is a neighbour of $x_2$. Then $p$ is ajdacent to $x_3$, for otherwise $x_1x_2x_3p$ would be a claw. Similarly, $q$ is adjacent to both $x_2$ and $x_3$. But then $pq$ is an edge (otherwise $x_1pqx_2$ would be a claw), so that $x_2x_3pq$ is a $K_4$. Contradiction. This concludes the proof that there exists a vertex $v$ with $\deg_{G^2}(v) \leq 9$.

From now on, let $v$ be one of the vertices for which we showed above that $\deg_{G^2}(v) \le 9$. In particular, if $v$ has degree $4$ then it is a good vertex.

Let us call a vertex $x$ {\em extremely bad} if $\deg_{G^2}(x) \ge 12$. We already observed that no vertex $x$ with $\deg(x)\le 3$ is extremely bad. If $\deg(x)=5$, then $N(x)$ induces a $5$-cycle and so by Lemma~\ref{lem:common} every vertex in $N(N(x))\setminus\{x\}$ has at least two neighbours in $N(x)$, so $|N(N(x))\setminus\{x\}|\le 5$. So a vertex $x$ can only be very bad if $\deg(x)=4$ and it is not good. In particular, by Lemma~\ref{lem:2ndclique}, not only does the neighbourhood of $x$ induce a disjoint union of two edges, but also the same is true for every neighbour of $x$. This implies that $N(v)$ does not contain an extremely bad vertex.

Let us call a vertex $x$ {\em very bad} if $\deg_{G^2}(x)=11$. We are done if there exists $x^* \in N_G(v)$ with $\deg_{G^2}(x^*)\leq 10$. So we may assume from now on that \textit{all} vertices in $N_G(v)$ are very bad, and we need to show that they induce a clique in $(G\setminus v)^2$. Assume for a contradiction that they do not. Since the neighbourhood of a degree $5$ vertex induces a $5$-cycle, of which the square is a clique, we may assume that $ \deg(v)\leq 4$. If $\deg(v)= 3$, then there are $x_1,x_2,x_3 \in N(v)$ such that $x_1x_2,x_2x_3 \notin E(G)$, so $\deg_G(x_2)\leq 3$, so $\deg_{G^2}(x_2)\leq 9$, contradicting that $x_2$ is very bad. Similarly if $\deg(v)\leq 2$. Thus we have reduced to the case that $v$ is a good vertex (of degree $4$). As argued before, we can then write $N(v)= \left\{x_1,x_2,x_3,x_4 \right\}$ such that $x_1x_2, x_2x_3, x_3x_4 \in E$ and $x_1x_3,x_2x_4 \notin E$. Since $N(v)$ does not induce a clique in $(G\setminus v)^2$, it follows that also $x_1x_4\notin(E)$. Therefore $\deg_G^2(x_1) \leq 10$, contradicting that $x_1$ is very bad. This completes the proof.
\end{proof}

\section{Clique number four}\label{sec:4}

The proof of Theorem~\ref{thm:3} suggests the following rougher but more general phenomenon. 
This follows from Lemmas~\ref{lem:2ndclique} and~\ref{lem:common} together with a double-counting argument.

For $G=(V,E)$ and $v\in V$, we define the following subset of $N(v)$:
\begin{align*}
Z(v) := \{w\in N(v) \mid \exists x, y\in N(v)\text{ such that }xw,wy\in E\text{ and }xy\notin E \}.
\end{align*}

\begin{Lemma}\label{lem:Z}
Let $G = (V,E)$ be a claw-free graph. For any $v\in V$,
\begin{align*}
|N(N(v))\setminus\{v\}|
& \le \sum_{w\in N(v)\setminus Z(v)} |N(w)\setminus(\{v\}\cup N(v))| + \frac12\sum_{w\in Z(v)} |N(w)\setminus(\{v\}\cup N(v))|\\
& \le \left(\deg(v)-\frac12|Z(v)|\right)(\omega(G)-1).
\end{align*}
\end{Lemma}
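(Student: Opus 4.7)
The plan is to prove both inequalities by a double counting argument keyed to Lemma~\ref{lem:common}.

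The second inequality is immediate from Lemma~\ref{lem:2ndclique}, which gives $|N(w)\setminus(\{v\}\cup N(v))|\le \omega(G)-1$ for every $w\in N(v)$. Summing $1$ times this bound over $w\in N(v)\setminus Z(v)$ and $\tfrac12$ times this bound over $w\in Z(v)$ yields $(\deg(v)-\tfrac12|Z(v)|)(\omega(G)-1)$.

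For the first inequality, abbreviate $A_w:=N(w)\setminus(\{v\}\cup N(v))$ for $w\in N(v)$, so that $N(N(v))\setminus\{v\}=\bigcup_{w\in N(v)}A_w$. For each vertex $z\in N(N(v))\setminus\{v\}$, let $n_Z(z):=|\{w\in Z(v):z\in A_w\}|$ and $n_{\bar Z}(z):=|\{w\in N(v)\setminus Z(v):z\in A_w\}|$, and set $n(z):=n_Z(z)+n_{\bar Z}(z)\ge 1$. The key structural claim is: \emph{if $z\in A_w$ for some $w\in Z(v)$, then $n(z)\ge 2$.} Indeed, by the definition of $Z(v)$ there exist $x,y\in N(v)\cap N(w)$ with $xy\notin E$, so applying Lemma~\ref{lem:common} to $v,w,x,y$ and $z\in A_w$ yields $xz\in E$ or $yz\in E$. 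Since $x,y\in N(v)\setminus\{w\}$ and $z\notin\{v\}\cup N(v)$, this places $z$ in $A_x$ or $A_y$, producing a second witness in $N(v)$.

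With this in hand, a short case analysis shows $n_{\bar Z}(z)+\tfrac12 n_Z(z)\ge 1$ for every $z\in N(N(v))\setminus\{v\}$: if $n_Z(z)=0$ then $n_{\bar Z}(z)=n(z)\ge 1$; otherwise the claim gives $n(z)\ge 2$, so either $n_{\bar Z}(z)\ge 1$ (then the sum is $\ge \tfrac32$) or $n_Z(z)\ge 2$ (then $\tfrac12 n_Z(z)\ge 1$). Summing this pointwise inequality over all $z$ and reinterpreting the right-hand side via double counting yields
\begin{align*}
|N(N(v))\setminus\{v\}|
&\le \sum_{z}\left(n_{\bar Z}(z)+\tfrac12 n_Z(z)\right)\\
&= \sum_{w\in N(v)\setminus Z(v)}|A_w|+\tfrac12\sum_{w\in Z(v)}|A_w|,
\end{align*}
which is exactly the desired bound.

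The only nontrivial step is the multiplicity claim, and this falls out cleanly from Lemma~\ref{lem:common} once one has packaged the relevant data into the set $Z(v)$; the rest is bookkeeping.
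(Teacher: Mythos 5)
Your proof is correct and follows essentially the same route as the paper: both hinge on the observation (via Lemma~\ref{lem:common}) that any external neighbour of a vertex in $Z(v)$ has at least two neighbours in $N(v)$, followed by a double count and Lemma~\ref{lem:2ndclique}. The paper phrases the count as $\sum_{w}\sum_{x} 1/|\{u\in N(v): x\in N(u)\}|$ while you use the equivalent pointwise inequality $n_{\bar Z}(z)+\tfrac12 n_Z(z)\ge 1$, but these are the same argument.
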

\begin{proof}

Let $w \in Z(v)$. By Lemma~\ref{lem:common}, any $x \in N(w) \setminus ({\left\{v\right\} \cup N(v)})$  also satisfies $x \in N(y) \setminus ({\left\{v\right\} \cup N(v)})$ for some $y\in N(v) \setminus \left\{w\right\}$. So 
\[
|N(N(v)) \setminus \left\{v\right\}| = \sum_{w \in N(v)} \sum_{x \in N(w)\setminus (\left\{v\right\} \cup N(v))}  \frac{1}{|\{  u \in N(v) \mid x \in N(u)\}|}
\]
is at most $\sum_{w\in N(v)\setminus Z(v)} |N(w)\setminus(\{v\}\cup N(v))| + \frac12\sum_{w\in Z(v)} |N(w)\setminus(\{v\}\cup N(v))|$.  Now apply Lemma~\ref{lem:2ndclique}.
\end{proof}

This has the following immediate consequence.

\begin{Corollary}\label{cor:Z}
Let $G = (V,E)$ be a claw-free graph. For any $v\in V$ with $\deg(v)\ge 2\omega(G)-1$, we have $Z(v)=N(v)$ and therefore
\begin{align*}
|N(N(v))\setminus\{v\}|
& \le \frac12\sum_{w\in N(v)} |N(w)\setminus(\{v\}\cup N(v))|
 \le \frac12\deg(v)(\omega(G)-1).
\end{align*}
\end{Corollary}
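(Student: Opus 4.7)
The plan is to reduce everything to Lemma~\ref{lem:Z}. Once we establish $Z(v) = N(v)$, the first inequality in the statement is immediate from Lemma~\ref{lem:Z}, and the second follows by bounding each summand $|N(w)\setminus(\{v\}\cup N(v))|$ by $\omega(G)-1$ via Lemma~\ref{lem:2ndclique}. So the entire task reduces to verifying that, under $\deg(v) \ge 2\omega(G)-1$, every $w \in N(v)$ belongs to $Z(v)$.

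To do this, I fix an arbitrary $w \in N(v)$ and partition $N(v)\setminus\{w\}$ as $A \sqcup B$, where $A = N(w)\cap N(v)$ and $B = N(v)\setminus(\{w\}\cup A)$. Membership $w \in Z(v)$ is equivalent to the existence of a non-edge in $A$, so I plan to argue by contradiction that $A$ cannot be a clique.

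Suppose $A$ is a clique. Since $w$ is adjacent to every vertex of $A$, the set $A\cup\{w\}$ is itself a clique sitting inside $N(v)$. Because $G$ contains no $K_{\omega(G)+1}$, any clique in $N(v)$ has size at most $\omega(G)-1$, so $|A| \le \omega(G)-2$. A symmetric claw-avoidance argument forces $B$ to be a clique as well: two non-adjacent vertices $b_1,b_2 \in B$, together with $w$ (which by construction has no neighbour in $B$), would form a stable set of three vertices inside $N(v)$ and hence a claw centred at $v$; applying the same clique-size cap to $B$ gives $|B| \le \omega(G)-1$. Combining,
\[
\deg(v) \;=\; 1 + |A| + |B| \;\le\; 1 + (\omega(G)-2) + (\omega(G)-1) \;=\; 2\omega(G)-2,
\]
which contradicts the hypothesis. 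Hence $A$ is not a clique, so $w \in Z(v)$, and since $w$ was arbitrary, $Z(v) = N(v)$.

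I do not expect a serious obstacle here: the contradiction above is a short Ramsey-style bookkeeping on $G[N(v)]$, whose structure is already tightly constrained by claw-freeness (no stable set of three in $N(v)$) and the clique-number cap (no $K_{\omega(G)}$ in $N(v)$). The rest of the corollary is a mechanical substitution into the two preceding lemmas.
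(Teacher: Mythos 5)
Your proof is correct and follows essentially the same route as the paper: both arguments show $w\in Z(v)$ for each $w\in N(v)$ by combining the fact that the non-neighbours of $w$ in $N(v)$ form a clique of size at most $\omega(G)-1$ (claw-freeness at $v$, i.e.\ Lemma~\ref{lem:2ndclique} with the roles of $v$ and $w$ swapped) with the clique-number cap on $N(w)\cap N(v)\cup\{v,w\}$; you phrase it as a contradiction on $\deg(v)$ while the paper phrases it as a lower bound on $\deg_{G[N(v)]}(w)$, but the content is identical. The reduction to Lemmas~\ref{lem:Z} and~\ref{lem:2ndclique} for the displayed inequalities matches the paper as well.
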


\begin{proof}
Let $w\in N(v)$ and consider $N_{G[N(v)]}(w)$.
By Lemma~\ref{lem:2ndclique}, 
 $\deg_{G[N(v)]}(w) \ge \deg(v)-(\omega(G)-1) -1 \ge \omega(G)-1$.
Then $N_{G[N(v)]}(w)$ contains a pair of non-adjacent vertices, or else $\{v,w\}\cup N_{G[N(v)]}(w)$ is a clique of $\omega(G)+1$ vertices. As $w$ was arbitrary, we have just shown that $Z(v) = N(v)$. So the result follows from Lemma~\ref{lem:Z}. 
\end{proof}

We now prove the following result. Similarly to what we saw if $\omega(G)=3$, this implies for any claw-free $G$ with $\omega(G)=4$ that $\chi(G^2)\le 22$ by Lemma~\ref{lem:greedy} with $K=21$ and $K'=19$, due to a result of Cranston~\cite{Cra06}. Furthermore, since $\chi'_s(4) \ge 20$, we may make the choice $K=\chi'_s(4)-1$ and $K'=19$ to obtain Theorem~\ref{thm:col}\ref{itm:col,4}, i.e.~that Conjecture~\ref{conj:JKP} for $\omega(G)=4$ reduces to the corresponding case of the Erd\H{o}s--Ne\v{s}et\v{r}il conjecture.

\begin{Theorem}\label{thm:4}
Let $G = (V,E)$ be a connected claw-free graph with $\omega(G)=4$. Then one of the following is true:
\begin{enumerate}
\item\label{itm:4,line}
$G$ is the line graph $L(F)$ of a graph $F$ of maximum degree $4$; or
\item\label{itm:4,deg}
there exists $v\in V$ with $\deg_{G^2}(v) \le 19$ such that the set of all vertices $x\in N_G(v)$ with $\deg_{G^2}(x) \ge 21$ induces a clique in $(G\setminus v)^2$.
\end{enumerate}
\end{Theorem}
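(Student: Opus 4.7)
The proof follows the template of Theorem~\ref{thm:3}, adapted to $\omega(G)=4$. By Lemma~\ref{lem:ramsey} and $R(4,3)=9$, we have $\Delta(G)\le 8$. The plan is: either we locate a vertex $v$ satisfying the second alternative~\ref{itm:4,deg}, or the local structure at every vertex is rigid enough to realise $G=L(F)$ for some graph $F$ with $\Delta(F)\le 4$ by the same Krausz-style edge-partition as in the proof of Theorem~\ref{thm:3}\ref{itm:3,line}.

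I first case on $\deg_G(v)$ to locate a candidate $v$ with $\deg_{G^2}(v)\le 19$. For $\deg(v)\le 4$, Lemma~\ref{lem:2ndclique} gives $\deg_{G^2}(v)\le 4\deg(v)\le 16$. For $\deg(v)=7$, Corollary~\ref{cor:Z} gives $\deg_{G^2}(v)\le 17$. For $\deg(v)=8$, the unique $R(4,3)$-extremal graph on $8$ vertices is forced to equal $G[N(v)]$, and a refined double count (in that graph each $w\in N(v)$ has three non-adjacent pairs of neighbours inside $N(v)$, so three applications of Lemma~\ref{lem:common} force every $z\in N(N(v))\setminus\{v\}$ to have at least three neighbours in $N(v)$) yields $\deg_{G^2}(v)\le 16$. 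For $\deg(v)\in\{5,6\}$, Lemma~\ref{lem:Z} gives $\deg_{G^2}(v)\le 4\deg(v)-\tfrac{3}{2}|Z(v)|$, which is $\le 19$ whenever $|Z(v)|\ge 1$ for $\deg(v)=5$ and whenever $|Z(v)|\ge 3$ for $\deg(v)=6$.

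A key structural observation is that $w\notin Z(v)$ forces $\deg_{G[N(v)]}(w)\le 2$, since otherwise $\{v,w\}\cup N_{G[N(v)]}(w)$ would contain a $K_5$ in $G$. A brief case analysis of the graphs on $5$ or $6$ vertices with $\alpha\le 2$ and $\omega\le 3$ then shows that the $|Z(v)|$-thresholds above fail precisely when $G[N(v)]=K_3+K_2$ (for $\deg(v)=5$) or $G[N(v)]=K_3+K_3$ (for $\deg(v)=6$)---each of which is a disjoint union of at most two cliques of size $\le 3$. If every vertex of $G$ is exceptional in this way, then every $G[N(v)]$ has this form and $\Delta(G)\le 6$; the Krausz-style partition of $E(G)$ (designating $\{v\}\cup C$ for each maximal clique $C$ of $G[N(v)]$, and checking consistency via the claw-free property) then realises $G=L(F)$ with $\Delta(F)\le 4$, giving alternative~\ref{itm:4,line}. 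Otherwise some candidate vertex $v$ satisfies $\deg_{G^2}(v)\le 19$, and I proceed to verify the clique condition.

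It remains to show that $B=\{x\in N_G(v)\mid\deg_{G^2}(x)\ge 21\}$ induces a clique in $(G\setminus v)^2$. The degree analysis above shows any $x\in B$ has $\deg_G(x)=6$, since every other value of $\deg_G(x)$ forces $\deg_{G^2}(x)\le 20$; combined with $\deg_{G^2}(x)\ge 21$, the previous case analysis then forces $G[N(x)]=K_3+K_3$. Given $x_1,x_2\in B$ with $x_1x_2\notin E$, since $\alpha(G[N(v)])\le 2$, every $y\in N_G(v)\setminus\{x_1,x_2\}$ is adjacent to $x_1$ or $x_2$; if some such $y$ is adjacent to both, then $y$ witnesses $(G\setminus v)^2$-adjacency of $x_1,x_2$. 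In the remaining case $N_G(v)\setminus\{x_1,x_2\}$ partitions as $A_1\sqcup A_2$ with $A_i\subseteq N_G(x_i)\setminus N_G(x_{3-i})$; using $G[N(x_i)]=K_3+K_3$ (so that $v$ lies in exactly one triangle of $G[N(x_i)]$ whose two other vertices lie in $A_i$, while the second triangle $T_i^{(2)}$ is the outside clique of $x_i$), I argue that the two outside triangles $T_1^{(2)}$ and $T_2^{(2)}$ must either share a vertex or be joined by an edge---producing a common neighbour of $x_1$ and $x_2$ in $G\setminus v$---or yield a direct contradiction with $\deg_{G^2}(x_i)\ge 21$. The main obstacle is this final structural step, which requires careful application of Lemma~\ref{lem:common} at witnesses in $A_1\cup A_2\cup T_i^{(2)}$ combined with the rigid $K_3+K_3$ neighbourhood structure of very-bad vertices.
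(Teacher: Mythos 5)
Your overall architecture follows the paper's (degree casework via Lemmas~\ref{lem:2ndclique}, \ref{lem:Z}, \ref{lem:common} and Corollary~\ref{cor:Z}, a Krausz-style reconstruction in the rigid case, then a ``very bad vertex'' analysis), but there are two genuine gaps. First, your characterisation of the degree-$6$ exceptional case is wrong: the threshold $|Z(v)|\ge 3$ fails not only for $G[N(v)]=K_3+K_3$ but also for $K_3+K_3$ plus one cross edge, which has $|Z(v)|=2$ (only the two endpoints of the extra edge lie in $Z(v)$) and is not a disjoint union of cliques. Such a vertex is not ``exceptional'' in your sense, yet your bounds give only $\deg_{G^2}(v)\le 21$ for it, so your dichotomy (``every vertex exceptional $\Rightarrow$ line graph; otherwise some candidate has $\deg_{G^2}(v)\le 19$'') does not cover graphs containing such vertices. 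The paper handles this by a separate refined count showing these vertices satisfy $\deg_{G^2}(v)\le 20$ (hence are never very bad), and by enlarging the list of impermissible neighbourhoods to six structures (including two triangles plus one edge, and plus two non-incident edges) so that the edge partition into cliques remains consistent --- a check that is more delicate than in Theorem~\ref{thm:3}\ref{itm:3,line} precisely because these neighbourhoods are not disjoint unions of cliques.

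Second, the clique step for the set $B$ is not actually proved --- you flag it yourself as ``the main obstacle'' --- and the sketch is missing the key ingredient. The paper does not reason about the outside triangles $T_i^{(2)}$ at all; it uses that the chosen $v$ is \emph{good}, i.e., $G[N(v)]$ is not two triangles plus at most two non-incident cross edges. Given two very bad $x_1,x_6\in N(v)$ at distance greater than $2$ in $G\setminus v$, their $K_3+K_3$ neighbourhoods force $N(v)$ to be two disjoint triangles plus cross edges between $\{x_2,x_3\}$ and $\{x_4,x_5\}$ only, and goodness then forces two cross edges sharing an endpoint, say $x_2x_4,x_3x_4$; a direct second-neighbourhood count then gives $\deg_{G^2}(x_1)\le 20$, a contradiction. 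Your candidate selection does not exclude, for instance, $G[N(v)]$ equal to two triangles plus two non-incident cross edges (which has $|Z(v)|=4\ge 3$ and is therefore one of your candidates), and your sketched alternatives do not close that case; note also that $T_1^{(2)}$ and $T_2^{(2)}$ being ``joined by an edge'' places $x_1$ and $x_2$ at distance $3$, not $2$, so it does not witness adjacency in $(G\setminus v)^2$. (A smaller inaccuracy: the $(4,3)$-Ramsey graph on $8$ vertices is not unique, but the degree-$8$ bound goes through anyway via two disjoint non-edges in each $N_{G[N(v)]}(w)$, as in the paper.)
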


\begin{proof}
First we show that either case~\ref{itm:4,line} applies or that there exists a vertex $v\in V$ with $\deg_{G^2}(v) \leq 19$. At the end, we show that, for all such $v$, it also holds that the set of vertices $x\in N_G(v)$ with $\deg_{G^2}(x)\geq 21$ induces a clique in $(G\setminus v)^2$.

First note that the maximum degree $\Delta(G)$ of $G$ is at most $8$. This follows from Lemma~\ref{lem:ramsey} and the fact that $R(4,3)=9$.

For $v\in V$ with $\deg(v) \le 4$, we have $\deg_{G^2}(v) \le 4 + 4\cdot 3 = 16$ by Lemma~\ref{lem:2ndclique}.

Note that, for $v\in V$ with $\deg(v)=5$, we have $\deg_{G^2}(v) \le 5 + 5\cdot 3 = 20$ by Lemma~\ref{lem:Z}, but equality cannot occur here unless $Z(v) = \emptyset$.  (Indeed, if $Z(v)\neq \emptyset$, then

For $v\in V$ with $\deg(v)=5$ and $Z(v)=\emptyset$, $G[N(v)]$ is the disjoint union of cliques, and in particular it must be the disjoint union of an edge and a triangle.

For $v\in V$ with $\deg(v) = 7$, we have $\deg_{G^2}(v) \le 7+21/2 =17.5$ by Corollary~\ref{cor:Z}.

Let $v\in V$ with $\deg(v) = 8$. By Corollary~\ref{cor:Z}, $Z(v)=N(v)$ and so we already have $\deg_{G^2}(v) \le 8+24/2 =20$, but we want one better.
Let $w\in N(v)$. By Lemma~\ref{lem:2ndclique}, $N(v)\setminus (N_{G[N(v)]}(w) \cup \left\{w\right\})$ is a clique, so $\deg_{G[N(v)]}(w) \ge \deg(v)-\omega(G)=4$. Now $N_{G[N(v)]}(w)$ contains no clique or stable set of three vertices, or else $G$ contains a clique of $5$ vertices or a claw. We can therefore find four vertices $x_1,x_2,x_3,x_4\in N_{G[N(v)]}(w)$ such that $x_1x_2,x_3x_4\notin E$.
(There is at least one non-edge among $x_1,x_2,x_3$, say, $x_1x_2$. Since $G$ is claw-free at least one of $x_1x_3$ and $x_2x_3$ is an edge, say, $x_2x_3$. Among $x_2,x_3,x_4$, there is at least one non-edge, which together with $x_1x_2$ or $x_1x_3$ forms a two-edge matching in the complement, which is what we wanted, after relabelling.)
By Lemma~\ref{lem:common}, for every $y\in N(w)\setminus(\{v\}\cup N(v))$, either $x_1y\in E$ or $x_2y\in E$ {\em and} $x_3y\in E$ or $x_4y\in E$.
We have just shown that every vertex in $N(N(v))\setminus\{v\}$ has at least three neighbours in $N(v)$.  Therefore, $|N(N(v))\setminus\{v\}| \le \frac13\deg(v)(\omega(G)-1) = 8$ and $\deg_{G^2}(v) \le 16$.

Let $v\in V$ with $\deg(v) = 6$. By Lemma~\ref{lem:2ndclique}, the minimum degree of $G[N(v)]$ satisfies $\delta(G[N(v)]) \ge \deg(v)-\omega(G)=2$. Since $G$ contains no clique of $5$ vertices, every vertex with degree at least $3$ in $G[N(v)]$ must also be in $Z(v)$. So we know there are at most two such vertices, or else by Lemma~\ref{lem:Z} $\deg_{G^2}(v) \le 6 + \lfloor(6-3/2)\cdot 3\rfloor = 19$.
First suppose there is a vertex $w$ with degree $5$ in $G[N(v)]$. Since $N_{G[N(v)]}(w)$ contains no clique or stable set of three vertices, it must be that $G[N(v)]$ consists of $w$ adjacent to all vertices of a $5$-cycle, in which case all six vertices have degree at least $3$ in $G[N(v)]$. This contradicts that at most two vertices of degree at least $3$ are allowed in $G[N(v)]$.
Next suppose that there is a vertex $w$ with degree $4$ in $G[N(v)]$. Then there exists $w'\in N(v)$ with $ww'\notin E$. As we argued in the last paragraph, there exist $x_1,x_2,x_3,x_4\in N_{G[N(v)]}(w)$ such that $x_1x_2,x_3x_4\notin E$. Since $G$ is claw-free, it must be that $w'$ is adjacent to one of $x_1$ and $x_2$ and also to one of $x_3$ and $x_4$; without loss of generality suppose $x_1w',x_3w'\in E$. It follows that $x_1,x_3,w$ are three vertices with degree at least $3$ in $G[N(v)]$, which was not allowed. So now we have reduced to the case where $2\le \delta(G[N(v)]) \le \Delta(G[N(v)]) \le 3$ and there are at most two vertices with degree $3$ in $G[N(v)]$. Since $G$ is claw-free, there are only two possibilities for the structure of $G[N(v)]$: either it is a disjoint union of two triangles, or it is that graph with the inclusion of exactly one additional edge.

We call a vertex $v$ {\em good} if its neighbourhood structure does not satisfy one of the following:
\begin{itemize}
\item $G[N(v)]$ is the disjoint union of a singleton and a triangle;
\item $G[N(v)]$ is the disjoint union of an edge and a triangle;
\item $G[N(v)]$ is the disjoint union of an edge and a triangle plus one more edge;
\item $G[N(v)]$ is the disjoint union of two triangles;
\item $G[N(v)]$ is the disjoint union of two triangles plus one more edge; or
\item $G[N(v)]$ is the disjoint union of two triangles plus two more non-incident edges.
\end{itemize}
Recall that a graph is the line graph of a graph if its edges can be partitioned into maximal cliques so that no vertex belongs to more than two such cliques and additionally, no two vertices are both in the same two cliques.
If no vertex $v\in V$ is good, then we can designate the maximal cliques as follows: for each $v\in V$ and for any $C$ one of the two maximum cliques of $G[N(v)]$ specified in one of the cases above (this is well-defined), we designate $v\cup C$ as a maximal clique for the requisite edge partition.
Indeed, every edge $v_1v_2$ is designated as part of one of the cliques, either from the perspective of $v_1$ or of $v_2$. Moreover, the clique to which $v_1v_2$ is designated does not differ depending on the endpoint from which the perspective is taken, by a brief consideration of the six impermissible neighbourhood structures defining a good vertex.
As each of the designated cliques has at most four vertices, it follows that in this case $G$ is the line graph $L(F)$ of a graph $F$ of maximum degree $4$.

Our case analysis has shown that either no vertex of $G$ is good, in which case $G$ is the line graph of a graph of maximum degree $4$, or there is some good $v\in V$ with $\deg_{G^2}(v) \le 19$. From now on, we fix one such good vertex $v$.

Let us call a vertex $x$ {\em very bad} if $\deg_{G^2}(x) \ge 21$. We already observed that $x$ must then have $\deg(x)=6$. 
By the case analysis above, the neighbourhood of $x$ either induces a disjoint union of two triangles or is that graph plus one more edge. However, the latter case is excluded, as we will now demonstrate. Suppose the neighbourhood of a vertex $x$ induces two triangles $w_1w_2w_3$ and $w_4w_5w_6$ plus one more edge $w_1w_4$. Our goal is to derive then that $\deg_{G^2}(x) \leq 20$, so that $x$ cannot be very bad. By Lemma~\ref{lem:2ndclique}, $w_i$ has at most three neighbours outside $\left\{v\right\}\cup N(v)$, for all $i\in \left\{2,3,5,6 \right\}$. So it suffices to show that $\left\{w_1,w_4 \right\}$ cannot have three neighbours outside $\left\{v\right\} \cup N(v)$ which are not a neighbour of $\left\{w_2,w_3,w_5,w_6 \right\}$. By contradiction, let $p$, $q$, $r$ be these neighbours. Without loss of generality, $p$ is a neighbour of $w_1$. Then $p$ is also adjacent to $w_4$ (otherwise claw). The same argument applies to $q$ and $r$, so that $\left\{p,q,r \right\}$ must be complete to $\left\{w_1,w_4 \right\}$. Furthermore, by claw-freeness, $pqr$ must be a triangle. But then $\{w_1,w_4,p,q,r\}$ induces a $K_5$. Contradiction. This completes the proof that the neighbourhood of a very bad vertex induces the disjoint union of two triangles.

Let $x_1$ be a very bad vertex in $N(v)$. Since $N(x_1)$ induces two disjoint triangles (one containing $v$) it follows that $x_1$ is part of a triangle $x_1x_2x_3$ in $N(v)$ and there is no edge between $x_1$ and $N(v) \setminus \left\{x_1,x_2,x_3\right\}$. Thus each vertex in $N(v)\setminus \left\{x_1,x_2,x_3\right\}$ is at distance exactly $2$ from $x_1$ (with respect to $G$) so that $N(v)\setminus \left\{x_1,x_2,x_3\right\}$ is a clique by Lemma \ref{lem:2ndclique}. 

Suppose now that the very bad vertices in $N(v)$ do not form a clique in $(G\setminus v)^2$. Writing $N(v):=\{x_1,\ldots, x_6\}$, then there exist two very bad vertices $x_1, x_6$, say, that are at distance greater than $2$ in $G\setminus v$. By the previous paragraph, $N(v)$ is covered by two disjoint triangles. Because $v$ is good, it follows (up to symmetry of $x_1$ and $x_6$) that the following is a subgraph of the graph induced by $N(v)$: two disjoint triangles $x_1x_2x_3$ and $x_4x_5x_6$ plus two edges $x_2x_4, x_3x_4$. Note that $x_2,x_3$ and $v$ are neighbours of $x_1$ that have a common neighbour at distance $2$ from $x_1$, namely $x_4$, and separately from that, $x_2$ and $x_3$ have a common neighbour in $N(N(x_1)) \cap N(N(v))\setminus \left\{v, x_1\right\}$. It follows that $\deg_{G^2}(x_1)\leq 20$, contradicting that $x_1$ is very bad. We have shown that the very bad vertices in $N(v)$ form a clique in $(G\setminus v)^2$ and this concludes the proof.
\end{proof}

\section{Clique number at least five}\label{sec:5}

The proof of Theorem~\ref{thm:4} suggests the following refinement of Lemma~\ref{lem:Z}. This could be useful towards reductions to the line graph setting for $\omega(G)\ge 5$.

For $G=(V,E)$ and $v\in V$ and $w\in N(v)$, we define $q(w)$ to be the matching number of the complement of $G[N_{G[N(v)]}(w)]$. Note that $q(w)\ge1$ if and only if $w\in Z(v)$.

\begin{Lemma}\label{lem:Q}
Let $G = (V,E)$ be a claw-free graph. For any $v\in V$,
\begin{align*}
|N(N(v))\setminus\{v\}|
& \le \sum_{w\in N(v)} \frac{|N(w)\setminus(\{v\}\cup N(v))|}{q(w)+1}
 \le (\omega(G)-1)\sum_{w\in N(v)}\frac1{q(w)+1}.
\end{align*}
\end{Lemma}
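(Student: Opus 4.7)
The plan is to generalise the double-counting argument used in Lemma~\ref{lem:Z}. For each $z\in N(N(v))\setminus\{v\}$, write $A_z := \{w\in N(v) : z\in N(w)\}$. Swapping the order of summation on the right-hand side of the first inequality gives
\[
\sum_{w\in N(v)} \frac{|N(w)\setminus(\{v\}\cup N(v))|}{q(w)+1} \;=\; \sum_{z\in N(N(v))\setminus\{v\}}\ \sum_{w\in A_z}\frac{1}{q(w)+1},
\]
so the first inequality reduces to proving $\sum_{w\in A_z}\frac{1}{q(w)+1}\ge 1$ for every $z\in N(N(v))\setminus\{v\}$.

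The crux is to establish $|A_z|\ge q(w)+1$ for every $w\in A_z$. Here I would fix a maximum matching of size $q(w)$ in the complement of $G[N_{G[N(v)]}(w)]$: concretely, $q(w)$ pairwise vertex-disjoint non-edges $\{a_i,b_i\}$ with $a_i,b_i\in N(v)\cap N(w)$. Note that $z\in N(w)\setminus(\{v\}\cup N(v))$ since $w\in A_z$ means $z\in N(w)$ while by the very definition of $N(\cdot)$ in the preliminaries, $z\in N(N(v))\setminus\{v\}$ forces $z\notin\{v\}\cup N(v)$. Thus Lemma~\ref{lem:common}, applied to $v$, $w$, and each non-adjacent pair $a_i,b_i$, yields $a_iz\in E$ or $b_iz\in E$. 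The vertex-disjointness of the matching then supplies $q(w)$ pairwise distinct vertices in $A_z\cap N(w)\setminus\{w\}$, and adding $w$ itself gives $|A_z|\ge q(w)+1$ as desired.

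With this key inequality in hand, for every $w\in A_z$ we have $\frac{1}{q(w)+1}\ge \frac{1}{|A_z|}$, so summing over the $|A_z|$ members of $A_z$ delivers $\sum_{w\in A_z}\frac{1}{q(w)+1}\ge 1$. This proves the first inequality in the lemma, and the second inequality is an immediate application of Lemma~\ref{lem:2ndclique} term by term. The only subtle point (the ``main obstacle'', though a mild one) is the bookkeeping verifying that the $q(w)$ vertices extracted from the matching are genuinely distinct from one another and from $w$; both follow because the matching is vertex-disjoint and each $a_i, b_i$ lies in $N_{G[N(v)]}(w)$, hence cannot coincide with $w$. The refinement over Lemma~\ref{lem:Z} is that a single vertex $z$ outside $\{v\}\cup N(v)$ must hit \emph{every} disjoint non-edge among the common neighbours of $v$ and $w$, rather than merely one non-edge as exploited previously.
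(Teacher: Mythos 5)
Your proposal is correct and is essentially the paper's own argument: the identity $\sum_{w\in N(v)}|N(w)\setminus(\{v\}\cup N(v))|/(q(w)+1)=\sum_{z}\sum_{w\in A_z}1/(q(w)+1)$ is the same double count the paper writes as $\sum_{w}\sum_{x}1/|\{u\in N(v)\mid x\in N(u)\}|$, and your key claim $|A_z|\ge q(w)+1$, obtained by applying Lemma~\ref{lem:common} to each non-edge of a maximum matching in the complement of $G[N_{G[N(v)]}(w)]$, is exactly the paper's central step. The distinctness bookkeeping you flag is likewise noted (briefly) in the paper, so nothing is missing.
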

\begin{proof}
Let $a_1b_1, a_2b_2,\ldots, a_{q(w)}b_{q(w)}$ be edges of a maximum matching in the complement of $G[N_{G[N(v)]}(w)]$. Note that $w$ and $a_1,b_1, \ldots, a_{q(w)}, b_{q(w)}$ are all distinct vertices in $N(v)\cap N(w)$. Let $x\in N(w)\setminus \left\{v\right\}$. For all $i \in \left\{ 1,\ldots, q(w) \right\}$, it holds that $wa_i, wb_i \in E$ and $a_ib_i \notin E$, so by Lemma~\ref{lem:common} $x$ is not only a neighbour of $w$, but also a neighbour of $a_i$ or $b_i$. This implies that $|\{ u \in N(v) \mid x \in N(u) \}| \geq q(w)+1$.
So
\[|N(N(v)) \setminus \left\{v\right\}| = \sum_{w \in N(v)} \sum_{x \in N(w)\setminus (\left\{v\right\} \cup N(v))}  \frac{1}{|\{  u \in N(v) \mid x \in N(u)\}|}
\]
is at most $\sum_{w\in N(v)} |N(w)\setminus(\{v\}\cup N(v))|/(q(w)+1)$. Now apply Lemma~\ref{lem:2ndclique}.
\end{proof}

Lemma~\ref{lem:Q} yields the following corollary.

\begin{Corollary}\label{cor:Q}
Let $G = (V,E)$ be a claw-free graph with $\omega(G)\ge 4$. For any $v\in V$ with $\deg(v)\ge 2\omega(G)-1$,
\begin{align*}
|N(N(v))\setminus\{v\}|
& \le \frac{\deg(v)(\omega(G)-1)}{\lceil(\deg(v)+1)/2\rceil+2-\omega(G)}.
\end{align*}
\end{Corollary}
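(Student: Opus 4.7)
The plan is to invoke Lemma~\ref{lem:Q}, so it suffices to establish the uniform lower bound
$$q(w)+1\ \ge\ \left\lceil\frac{\deg(v)+1}{2}\right\rceil+2-\omega(G)$$
for every $w\in N(v)$; summing $1/(q(w)+1)$ over $w\in N(v)$ then yields the claim.

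Fix $w\in N(v)$ and set $n:=|N(v)\cap N(w)|$. Applying Lemma~\ref{lem:2ndclique} with the roles of $v$ and $w$ exchanged gives $|N(v)\setminus(\{w\}\cup N(w))|\le \omega(G)-1$, so $n\ge \deg(v)-\omega(G)$. Let $T:=G[N(v)\cap N(w)]$. Since $T\subseteq N(w)$ and $G$ is claw-free, $\alpha(T)\le 2$; and since any clique of $T$ together with $v$ and $w$ is a clique of $G$, $\omega(T)\le \omega(G)-2$. Equivalently, the complement $\overline{T}$ is triangle-free with $\alpha(\overline{T})\le \omega(G)-2$, and by definition $q(w)=\mu(\overline{T})$.

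The heart of the argument is the matching lower bound $\mu(\overline{T})\ge \lceil(n-\omega(G)+3)/2\rceil$. Let $M$ be a maximum matching of $\overline{T}$, set $m:=|M|$, and let $U:=V(\overline{T})\setminus V(M)$; by maximality of $M$, the set $U$ is independent in $\overline{T}$. The hypothesis $\deg(v)\ge 2\omega(G)-1$ gives $n\ge \omega(G)-1>\omega(G)-2\ge \alpha(\overline{T})$, so $m\ge 1$. I claim that some $ab\in M$ has an endpoint, say $b$, with no neighbour in $U$. Otherwise every $ab\in M$ has $a$ adjacent to some $u_a\in U$ and $b$ adjacent to some $u_b\in U$; triangle-freeness of $\overline{T}$ forces $u_a\ne u_b$ (else $\{a,b,u_a\}$ is a triangle), whence $u_a\,a\,b\,u_b$ is an $M$-augmenting path, contradicting the maximality of $M$. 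For such $b$, the set $\{b\}\cup U$ is independent in $\overline{T}$ and has size $1+(n-2m)$, which must be at most $\alpha(\overline{T})\le \omega(G)-2$; rearranging yields $m\ge (n-\omega(G)+3)/2$, and as $m$ is an integer, $m\ge \lceil(n-\omega(G)+3)/2\rceil$.

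Finally, substituting $n\ge \deg(v)-\omega(G)$ and simplifying the ceilings gives $q(w)+1\ge \lceil(\deg(v)+1)/2\rceil+2-\omega(G)$, which completes the proof via Lemma~\ref{lem:Q}. The main obstacle is precisely this matching lower bound: the naive Gallai-type estimate $2\mu+\alpha\ge n$ falls one short of what is required, and the extra $+1$ is recovered only through the augmenting-path argument above, which is where triangle-freeness of $\overline{T}$—equivalently, claw-freeness of $G$—is decisively used. This step subsumes and generalises the \emph{ad hoc} computation appearing in the proof of Theorem~\ref{thm:4} for the case $\deg(v)=8$, $\omega(G)=4$.
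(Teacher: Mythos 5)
Your proof is correct. The overall skeleton coincides with the paper's: both reduce Corollary~\ref{cor:Q} to the uniform lower bound $q(w)\ge\lceil(\deg(v)+1)/2\rceil+1-\omega(G)$ and then invoke Lemma~\ref{lem:Q}, and both obtain $|N(v)\cap N(w)|\ge\deg(v)-\omega(G)$ from Lemma~\ref{lem:2ndclique}. Where you genuinely diverge is in how the matching number of $\overline{T}$ is bounded. The paper greedily extracts non-edges (each batch of $\omega(G)-1$ vertices of $N_{G[N(v)]}(w)$ contains one, else a clique of size $\omega(G)+1$ appears), which yields $q(w)\ge\lceil\deg(v)/2\rceil+1-\omega(G)$, and then recovers the missing $+1$ only in the even-degree case by an ad hoc analysis of three non-edges among the $\omega(G)$ leftover vertices, using claw-freeness to rule out an independent triple. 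You instead prove the clean structural statement that a triangle-free graph on $n$ vertices with independence number at most $\alpha$ has matching number at least $\lceil(n-\alpha+1)/2\rceil$: the maximality/augmenting-path argument shows some matched vertex can be adjoined to the set of unmatched vertices to form a larger independent set, beating the Gallai-type bound $2\mu\ge n-\alpha$ by exactly the needed one. Your route is uniform in the parity of $\deg(v)$, isolates precisely where claw-freeness (triangle-freeness of $\overline{T}$) enters, and as you note also subsumes the separate computation for $\deg(v)=8$ in the proof of Theorem~\ref{thm:4}; the paper's route is more elementary but requires the parity case split and a bespoke three-non-edge argument. All the supporting facts you use ($\alpha(T)\le2$ from claw-freeness at $w$, $\omega(T)\le\omega(G)-2$ since $v,w$ extend any clique of $T$, and $q(w)=\mu(\overline{T})$ by the paper's definition) check out, and the final arithmetic $\lceil(\deg(v)-2\omega(G)+3)/2\rceil=\lceil(\deg(v)+1)/2\rceil+1-\omega(G)$ is right.
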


\begin{proof}
Let $w\in N(v)$.
It suffices to establish a suitable lower bound for $q(w)$. By Lemma~\ref{lem:2ndclique}, $\deg_{G[N(v)]}(w) \ge \deg(v)-\omega(G)\ge \omega(G)-1$, and so in any subset of $N_{G[N(v)]}(w)$ with at least $\omega(G)-1$ vertices there must be at least one non-edge (or else $G$ has a clique of $\omega(G)+1$ vertices). So we can iteratively extract two vertices from $N_{G[N(v)]}(w)$ that form an edge of the complement of $G[N_{G[N(v)]}(w)]$ until at most $\omega(G)-2$ vertices remain. It follows that
\begin{align*}
q(w)
& \ge \left\lceil\frac12(\deg_{G[N(v)]}(w)-(\omega(G)-2))\right\rceil
\ge \left\lceil\frac12(\deg(v)-\omega(G)-(\omega(G)-2))\right\rceil \\
& = \lceil\deg(v)/2\rceil +1-\omega(G).
\end{align*}
If $\deg(v)$ is even, then after we have extracted $\lceil\deg(v)/2\rceil-\omega(G)$ pairs as above at least $\omega(G)$ vertices remain, call them $x_1,\dots,x_{\omega(G)}$. Among $x_1,\dots,x_{\omega(G)-1}$ there is at least one non-edge, say, $x_1x_2\notin E$ without loss of generality.

Since $\omega(G)\ge4$, there is at least one non-edge $ab$ among $x_2,\ldots,x_{\omega(G)}$, and at least one non-edge $cd$ among $x_1,x_3,\ldots x_{\omega(G)}$. The non-edges $x_1x_2, ab$ and $cd$ may not form a stable set of size three, since otherwise there would be a claw. Therefore at least two of them comprise a two-edge matching in the complement of $G[\{x_1,\dots,x_{\omega(G)}\}]$.
So indeed we have for any parity of $\deg(v)$ that
\begin{align*}
q(w)
& \ge \lceil(\deg(v)+1)/2\rceil +1-\omega(G).
\end{align*}
As $w$ was arbitrary, the result now follows from Lemma~\ref{lem:Q}.
\end{proof}

Let us now make explicit some general consequence of Corollary~\ref{cor:Q}.
An awkward but routine optimisation checks that for $k\geq 5$ and $x\in\{2k-1,2k,\dots\}$, the expression $f(x):=x+\frac{x(k-1)}{\lceil (x+1)/2\rceil+2-k}$ is maximised with $x=2k-1$ or with $x \in \left\{y,y+1\right\}$ for $y$ as large as possible. (This follows e.g.~from the facts that $f(2k-1) > f(2k) $ and that there is some $x_0> 2k-1$ such that the derivative of $f^*(x):=x+\frac{x(k-1)}{(x+1)/2 +2-k} $ is negative for all $2k-1\le x<x_0$ and positive for all $x>x_0$.) By Lemma~\ref{lem:ramsey},  $R(\omega(G),3)-1$ and $R(\omega(G),3)-2$ are the two largest allowed values of $\deg(v)$.
So by Corollary~\ref{cor:Q}, if $v$ is a vertex of a claw-free graph $G$ with $\deg(v) \ge 2\omega(G)-1$, then $\deg_{G^2}(v) \leq \max \{f(2 \omega(G)-1), f(R(\omega(G),3)-1), f(R(\omega(G),3)-2)\}$, yielding
\begin{align}\label{eqn:Q}
\deg_{G^2}(v) 
\le & \max\bigg\{2\omega(G)-1+(\omega(G)-1/2)(\omega(G)-1),\nonumber\\
&\,\,\,\,\,\,\,\,\,\,\,\,\,\,\,\,R(\omega(G),3)-2 + \frac{(R(\omega(G),3)-2)(\omega(G)-1)}{(R(\omega(G),3)-1)/2+2-\omega(G)},\\
&\,\,\,\,\,\,\,\,\,\,\,\,\,\,\,\,R(\omega(G),3)-1 + \frac{(R(\omega(G),3)-1)(\omega(G)-1)}{R(\omega(G),3)/2+2-\omega(G)}
\bigg\}.\nonumber
\end{align}
Moreover,~\eqref{eqn:Q} remains valid when we substitute $R(\omega(G),3)$ with any upper bound.
It is known~\cite{ErSz35} that $R(\omega(G),3)\le \binom{\omega(G)+1}{2}$. With this and some routine calculus,~\eqref{eqn:Q} implies that $\deg_{G^2}(v) \le 2\omega(G)(\omega(G)-1)$ provided $\omega(G)\ge 3$. Since those $v$ with $\deg(v) \le 2\omega(G)-2$ have $\deg_{G^2}(v) \le 2\omega(G)(\omega(G)-1)$ by Lemma~\ref{lem:2ndclique}, we have the following ``trivial'' bound on $\chi(G^2)$. This was proved not via $\Delta(G^2)$ but by a different method in~\cite{JKP16+}.

\begin{Corollary}\label{cor:trivial}
If $G$ is a claw-free graph, then $\chi(G^2) \le \Delta(G^2)+1\le 2\omega(G)(\omega(G)-1)+1$.
\end{Corollary}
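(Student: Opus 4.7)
The plan is to combine the classical greedy bound $\chi(H) \le \Delta(H)+1$, applied to $H = G^2$, with the uniform per-vertex estimate $\deg_{G^2}(v) \le 2\omega(\omega-1)$ for every $v$, where $\omega := \omega(G)$. The first inequality is elementary, so all the work concentrates on the second.

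When $\omega \le 2$, Lemma~\ref{lem:ramsey} forces $\Delta(G) \le R(\omega,3)-1 \le 2$, and then Lemma~\ref{lem:2ndclique} yields $\deg_{G^2}(v) \le \deg(v) + \deg(v)(\omega-1) = \deg(v)\cdot\omega \le 2\omega(\omega-1)$. For $\omega \ge 3$ and any $v\in V$, I would split on $\deg(v)$. If $\deg(v) \le 2\omega-2$, the same chain via Lemma~\ref{lem:2ndclique} still gives $\deg_{G^2}(v)\le \deg(v)\cdot\omega \le 2\omega(\omega-1)$, so only the remaining case needs work.

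If instead $\deg(v) \ge 2\omega-1$, the plan is to invoke Corollary~\ref{cor:Q} together with the optimisation already carried out in the paragraph preceding the corollary statement. That bounds $\deg_{G^2}(v)$ by the maximum of the three explicit expressions displayed in \eqref{eqn:Q}. Into this one substitutes the Erd\H{o}s--Szekeres bound $R(\omega,3) \le \binom{\omega+1}{2}$ (which preserves the inequality, as noted immediately after \eqref{eqn:Q}), so that each expression becomes a rational function of $\omega$ alone. It then remains to verify that each of the three rational functions is at most $2\omega(\omega-1)$ for every integer $\omega \ge 3$.

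The main obstacle is this last verification, which is purely computational. The first expression simplifies to $\omega^2 + \omega/2 - 1/2$, and the inequality $\omega^2 + \omega/2 - 1/2 \le 2\omega(\omega-1)$ rearranges to $\omega^2 - 5\omega/2 + 1/2 \ge 0$, which is clearly valid for $\omega \ge 3$. The second and third expressions, after substituting $R(\omega,3) \le \omega(\omega+1)/2$, each reduce to a rational function whose comparison with $2\omega(\omega-1)$ becomes a polynomial inequality of low degree; checking this at $\omega = 3$ directly and then arguing that the slack is increasing thereafter (for instance by comparing leading coefficients, or by a single differentiation) closes the verification. This is the ``routine calculus'' alluded to before the statement, and no further conceptual difficulty arises.
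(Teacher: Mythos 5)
Your proposal is correct and follows essentially the same route as the paper: Lemma~\ref{lem:2ndclique} for vertices of degree at most $2\omega-2$, the bound \eqref{eqn:Q} derived from Corollary~\ref{cor:Q} combined with $R(\omega,3)\le\binom{\omega+1}{2}$ for higher-degree vertices, and then the greedy bound $\chi(G^2)\le\Delta(G^2)+1$. Your explicit handling of $\omega\le 2$ and the algebra for the first expression in \eqref{eqn:Q} match the ``routine calculus'' the paper leaves implicit.
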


Also~\eqref{eqn:Q} implies that, if $v$ is a vertex of a claw-free graph $G$ with $\deg(v) \ge 2\omega(G)-1$, then $\deg_{G^2}(v) \le \frac14(5\omega(G)^2 -2\omega(G)+1)-1$ provided $\omega(G)\ge 5$. We use this for the following.

\begin{Theorem}\label{thm:ge5}
Let $G = (V,E)$ be a connected claw-free graph with $\omega(G)=\omega \ge 5$. Then one of the following is true:
\begin{enumerate}
\item\label{itm:ge5,line}
$G$ is the line graph $L(F)$ of a graph $F$ of maximum degree $\omega$; or
\item\label{itm:ge5,deg}
there exists $v\in V$ with $\deg_{G^2}(v) \le 2\omega(\omega-1)-4$ such that $\deg_{G^2}(x) \le 2\omega(\omega-1)-3$ for all $x\in N_G(v)$.
\end{enumerate}
\end{Theorem}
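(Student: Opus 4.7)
The plan mirrors the scheme of Theorem~\ref{thm:4}: perform a case analysis on $\deg(v)$, combine Lemmas~\ref{lem:Q} and~\ref{lem:2ndclique} with a Krausz-type rigidity argument, and exploit the extra slack from $\omega\ge 5$. Call $v$ \emph{good} if $\deg_{G^2}(v)\le 2\omega(\omega-1)-4$ and \emph{very bad} if $\deg_{G^2}(v)\ge 2\omega(\omega-1)-2$.

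First I show every $v$ with $\deg(v)\ne 2\omega-2$ is good. By Lemma~\ref{lem:ramsey}, $\Delta(G)\le R(\omega,3)-1$. If $\deg(v)\le 2\omega-3$, Lemma~\ref{lem:2ndclique} gives $\deg_{G^2}(v)\le \omega\deg(v)\le (2\omega-3)\omega\le 2\omega(\omega-1)-4$ (using $\omega\ge 4$). If $\deg(v)\ge 2\omega-1$, the discussion around~\eqref{eqn:Q} with $\omega\ge 5$ yields $\deg_{G^2}(v)\le \tfrac14(5\omega^2-2\omega+1)-1\le 2\omega(\omega-1)-4$.

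Now suppose $\deg(v)=2\omega-2$. If $Z(v)=N(v)$, Lemma~\ref{lem:Q} gives $\deg_{G^2}(v)\le (2\omega-2)+(\omega-1)^2=\omega^2-1\le 2\omega(\omega-1)-4$ for $\omega\ge 3$. Otherwise some $w\in N(v)$ has $q(w)=0$, so $N_{G[N(v)]}(w)$ is a clique, and combining $\deg_{G[N(v)]}(w)\ge \omega-2$ (Lemma~\ref{lem:2ndclique}) with $\omega(G)=\omega$ gives equality; thus $C_w:=\{w\}\cup N_{G[N(v)]}(w)$ is a clique of size $\omega-1$. The complement $D:=N(v)\setminus C_w$ has size $\omega-1$ and is a clique too, since any non-adjacent pair $u,u'\in D$ would yield a claw at $v$ on $\{u,u',w\}$. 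If any cross-edge $cd$ exists with $c\in C_w$, $d\in D$, then Lemma~\ref{lem:common} applied at $c$ (whose common neighbourhood with $v$ contains the non-adjacent pair $w,d$) places $c\in Z(v)$, and by symmetry $d\in Z(v)$; the refined Lemma~\ref{lem:Q} then gives $\deg_{G^2}(v)\le 2\omega(\omega-1)-(\omega-1)\le 2\omega(\omega-1)-4$. So any non-good $v$ has $\deg(v)=2\omega-2$ and $G[N(v)]$ is the disjoint union of two copies of $K_{\omega-1}$.

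If every vertex has this rigid structure, a Krausz-type argument (designate the two $\omega$-cliques $\{v\}\cup K_1^v$, $\{v\}\cup K_2^v$ at each $v$; consistency of the clique assignment across each edge $vw$ follows from the disjointness of $K_1^v,K_2^v$ inside $N(v)$) shows $G=L(F)$ for some simple $\omega$-regular $F$, settling~\ref{itm:ge5,line}. Otherwise some vertex is good. The main obstacle is to find a good $v$ with no very bad neighbour. Any very bad $x$ is non-good, hence structurally rigid, and the outside-neighbour deficit $\sum_{y\in N(x)}|O_y|-|\bigcup_y O_y|$ (where $O_y:=N(y)\setminus(\{x\}\cup N(x))$) is at most $2$ by the very-bad condition together with Lemma~\ref{lem:2ndclique}. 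I expect to exploit this by showing that each good $y\in N(x)$ costs at least one unit of this deficit budget---either $\deg(y)<2\omega-2$ (degree deficit) or a cross-edge in $G[N(y)]$ forcing a shared outside neighbour (collision deficit)---so at most two neighbours of any very bad vertex can be good. The rigid structure therefore propagates through almost the whole neighbourhood of every very bad vertex, and iterating this propagation using connectedness either produces a good vertex disjoint from the very-bad set, yielding~\ref{itm:ge5,deg}, or forces $G$ itself into the line-graph form of~\ref{itm:ge5,line}.
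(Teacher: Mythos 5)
Your case analysis on $\deg(v)$ reproduces the paper's argument essentially correctly: the bounds for $\deg(v)\le 2\omega-3$ and $\deg(v)\ge 2\omega-1$ are the paper's, and for $\deg(v)=2\omega-2$ your route (if some $w$ has $q(w)=0$ then $N(v)$ splits into two cliques $C_w\cup\{w\}$ and $D$ of size $\omega-1$ each, and any cross-edge forces $|Z(v)|\ge 2$) is a valid alternative to the paper's argument, which instead assumes $|Z(v)|=1$ and derives that $G[N(v)]$ would be two cliques sharing a vertex, too few vertices. Two small repairs are needed in your version: Lemma~\ref{lem:2ndclique} does not by itself give $\deg_{G[N(v)]}(w)\ge\omega-2$ (you do not know $\deg(w)$); the correct route is that $D=N(v)\setminus(\{w\}\cup C_w)$ is a clique by claw-freeness at $v$, so $|D|\le\omega-1$ forces $|\{w\}\cup C_w|=\omega-1$. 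And ``by symmetry $d\in Z(v)$'' is not literal symmetry: you need that $c$ cannot be complete to $D$ (else $\{v,c\}\cup D$ is a $K_{\omega+1}$), which supplies a non-neighbour $d'\in D$ of $c$ witnessing $d\in Z(v)$.

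The genuine gap is the last step. After establishing that every non-good vertex has the rigid structure, you still must produce a good vertex \emph{none of whose neighbours is very bad}, and here your proposal only sketches a plan (``I expect to exploit this by showing\dots'', ``iterating this propagation using connectedness either produces \dots or forces\dots''). The deficit-counting claim that each good neighbour of a very bad vertex costs a unit of the budget of $2$ is asserted, not proved, and even if granted, ``at most two good neighbours per very bad vertex'' plus connectedness does not obviously yield either conclusion of the theorem: a graph could in principle have every good vertex adjacent to a very bad one without being a line graph, and no termination argument is given. The paper closes this differently and more cheaply: it enlarges the line-graph-compatible class to \emph{three} neighbourhood types (two disjoint $K_{\omega-1}$'s; that plus one edge; disjoint $K_{\omega-2}$ and $K_{\omega-1}$), shows via Lemma~\ref{lem:2ndclique} that \emph{every} neighbour of a very bad vertex has one of these three types, and concludes that any vertex whose neighbourhood avoids all three types is simultaneously good and has no very bad neighbour; if no such vertex exists, the Krausz partition applies to the whole graph. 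Your Krausz step, restricted to the single rigid type, does not cover the ``plus one edge'' and ``$K_{\omega-2}\sqcup K_{\omega-1}$'' neighbourhoods, which is exactly why your case (B) is left with good vertices that may still have very bad neighbours. As written, the proposal does not prove conclusion~\ref{itm:ge5,deg}.
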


\begin{proof}
By the last remark (which followed from Corollary~\ref{cor:Q}), for $v\in V$ with $\deg(v) \ge 2\omega-1$, we have that $\deg_{G^2}(v) \le \frac14(5\omega^2 -2\omega+1)-1\le2\omega(\omega-1)-4$ since $\omega\ge5$.

For $v\in V$ with $\deg(v) \le 2\omega-3$, we have by Lemma~\ref{lem:2ndclique} that $\deg_{G^2}(v) \le \omega(2\omega-3)\le2\omega(\omega-1)-4$ since $\omega\ge5$.

Let $v\in V$ with $\deg(v) = 2\omega-2$. If $G[N(v)]$ is not the disjoint union of two cliques, then $|Z(v)| \ge 2$.
(Clearly $|Z(v)| >0$ if $G[N(v)]$ is not the disjoint union of two cliques, but if on the contrary $|Z(v)|=1$ then let $w\in N(v)$ be the unique vertex such that there exist $x,y\in N(v)$ for which $xw,wy\in E$, $xy\notin E$. By the uniqueness of $w$, $x$ does not have any neighbours in $N(v)$ in common with $y$. Moreover, $(\{x\}\cup N(x)) \cap N(v)$ is a clique, because otherwise we would either have a claw or $x\in Z(v)$. By the uniqueness of $w$, $(\{x\}\cup N(x)) \cap N(v) \subseteq N(w) \cup \left\{w \right\}$. The same arguments hold with the roles of $x$ and $y$ exchanged. It follows that $G[N(v)]$ is the union of two cliques with exactly one vertex in common. Since each clique in $G[N(v)]$ is of size at most $\omega-1$, this is a contradiction to $\deg(v) = 2\omega-2$.)
It then follows by Lemma~\ref{lem:Z} that $\deg_{G^2}(v) \le (2\omega-1)(\omega-1) \le2\omega(\omega-1)-4$ since $\omega\ge5$.

We have shown that one of the following two possibilities must hold for $G$:
\begin{enumerate}
\item
for every $v\in V$ it holds that $G[N(v)]$ is the disjoint union of two cliques of size $\omega-1$ or that same graph with one extra edge between the two cliques, or the disjoint union of two cliques one of size $\omega-2$ the other of size $\omega-1$; or
\item
there is some $v\in V$ with $\deg_{G^2}(v) \le 2\omega(\omega-1)-4$.
\end{enumerate}
In the former situation, $G$ is the line graph of a graph of maximum degree $\omega$.

Let us call a vertex $v$ {\em very bad} if $\deg_{G^2}(v) \ge 2\omega(\omega-1)-2$. We already observed that $v$ must then have $\deg(v)=2\omega-2$. As argued just above, Lemma~\ref{lem:Z} implies that the neighbourhood of $v$ induces a disjoint union of two cliques of size $\omega-1$. Moreover, using Lemma~\ref{lem:2ndclique}, we have that for every neighbour $x$ of $v$ the neighbourhood of $x$ induces the disjoint union of two cliques of size $\omega-1$, or that same graph plus one more edge, or the disjoint union of two cliques one of size $\omega-2$ the other of size $\omega-1$.
This implies that, for every vertex $v$ for which we showed above that $\deg_{G^2}(v) \le 2\omega(\omega-1)-4$ (not including those cases corresponding to the promised line graph of maximum degree $\omega$), it also holds that $N(v)$ does not contain a very bad vertex. This completes the proof.
\end{proof}

\begin{proof}[Proof of Theorem~\ref{thm:col,ge5}]
Together with the trivial bound, Theorem~\ref{thm:ge5} certifies that we can apply Lemma~\ref{lem:greedy} with $K=K'=\max\{\chi'_s(\omega), 2\omega(\omega-1)-3\}-1$.
\end{proof}

We wanted to illustrate how our methods could extend to larger values of $\omega(G)$. It is likely that Theorem~\ref{thm:ge5} can be improved, particularly since we did not use the full strength of Lemma~\ref{lem:greedy}. On the other hand, since the Erd\H{o}s--Ne\v{s}et\v{r}il conjecture itself is open apart from the case of graphs of maximum degree at most $3$, we leave this to further investigation.

\subsection*{Acknowledgments}

We thank Luke Postle for alerting us to a subtlety in an earlier version. We also thank R\'emi de Joannis de Verclos and Lucas Pastor for helpful discussions in relation to Section~\ref{sec:greedy}.



\bibliographystyle{abbrv}
\bibliography{squarecfsmall}

\bigskip

\small
\bigskip
\noindent

\end{document}